    \titleformat{\paragraph}[runin]{\normalfont\normalsize\bfseries}{\theparagraph}{1em}{}
    \newtheorem{theorem}{Theorem}[section]
    \newtheorem{lemma}[theorem]{Lemma}
    \newtheorem{coro}[theorem]{Corollary}
    \newtheorem{prop}[theorem]{Proposition}
    \theoremstyle{definition}
    \newtheorem{defn}[theorem]{Definition}
    \newtheorem{remark}[theorem]{Remark}
    \newtheorem*{axiom}{Axiom}
   \theoremstyle{remark}	
    \newcommand{\mf}[1]{\mathfrak{#1}}
    \newcommand{\R}{\mathbb{R}}
    \newcommand{\Z}{\mathbb{Z}}
    \newcommand{\C}{\mathcal{C}}
    \newcommand{\Ce}{\mathcal{E}}
    \DeclareMathOperator{\diam}{diam}
    \newcommand{\dist}{\mathsf{d}}
    \newcommand{\Sym}{\operatorname{Sym}}
    \renewcommand{\bar}{\overline}
     \newcommand*{\mc}[1]{\mathcal{#1}}
     \def\nest{\sqsubseteq}
     \def\trans{\pitchfork}
	 \def\squiggly{\bgroup \markoverwith{\textcolor{red}{\lower3.5\p@\hbox{\sixly \char58}}}\ULon}
    \newcommand{\s}{\mathfrak S}
\newcommand{\propnest}{\sqsubsetneq}
\setlist{nosep}
\newcommand{\showcomments}{yes}
\newsavebox{\commentbox}
\newcounter{claimcount}
\newenvironment{shortitem}{\begin{itemize}\vspace{-\parskip}\setlength{\itemsep}{1pt}
    \setlength{\parskip}{0pt}\setlength{\parsep}{0pt}}{\end{itemize}}
\newenvironment{shortenum}{\begin{enumerate}\vspace{-\parskip}\setlength{\itemsep}{1pt}
    \setlength{\parskip}{0pt}\setlength{\parsep}{0pt}}{\end{enumerate}}
\newcounter{thmcount}
\newcommand*{\numberedtheorem}[3]{\theoremstyle{plain}\newtheorem*{makethm\thethmcount}{#1}
    \ifthenelse{\equal{#2}{}}{\begin{makethm\thethmcount}#3\end{makethm\thethmcount}\stepcounter{thmcount}}
    {\begin{makethm\thethmcount}[#2]#3\end{makethm\thethmcount}\stepcounter{thmcount}}}
\DeclareMathOperator{\stab}{Stab}
\DeclareMathOperator{\Isom}{Isom}
\DeclareMathOperator{\OO}{O}
\newcommand{\ignore}[2]{\left\{\kern-.7ex\left\{#1\right\}\kern-.7ex\right\}_{#2}}
\DeclareMathOperator{\bigset}{Big}
\let\oldbibliography\thebibliography\renewcommand{\thebibliography}[1]{\oldbibliography{#1}\setlength{\itemsep}{-1pt}}
    \author{Harry Petyt and Davide Spriano}
    \title{Unbounded Domains in Hierarchically Hyperbolic Groups}
    \date{}
\begin{document}

    \maketitle  

\begin{abstract}
We investigate unbounded domains in hierarchically hyperbolic groups and obtain constraints on the possible hierarchical structures. Using these insights, we characterise the structures of virtually abelian HHGs and show that the class of HHGs is not closed under finite extensions. This provides a strong negative answer to the question of whether being an HHG is invariant under quasiisometries. Along the way, we show that infinite torsion groups are not HHGs. 

By ruling out pathological behaviours, we are able to give simpler, direct proofs of the rank-rigidity and omnibus subgroup theorems for HHGs. This involves extending our techniques so that they apply to all subgroups of HHGs.
\end{abstract}

\section{Introduction}

Hierarchically hyperbolic spaces and groups were introduced by Behrstock--Hagen--Sisto \cite{behrstockhagensisto:hierarchically:1} with the aim of providing a common framework for studying mapping class groups and cubical groups. The motivating observation was that several of the tools and techniques introduced by Masur--Minsky to study the mapping class group can be applied to (the majority of) groups acting on CAT(0) cube complexes. 

This common framework not only allows one to prove results about many groups at once, but also gives a way to transfer techniques from one setting across to others. For example, Behrstock--Hagen--Sisto \cite{behrstockhagensisto:quasiflats} proved a hierarchical version of a result of Huang \cite{huang:top} for CAT(0) cube complexes, which enabled them to deduce Farb's quasiflat conjecture for mapping class groups. Similarly, work of Bowditch on median spaces \cite{bowditch:median} has been adapted to the hierarchical setting to show that mapping class groups are semihyperbolic \cite{haettelhodapetyt:coarse}. In the other direction, uniform exponential growth is well understood for mapping class groups \cite{andersonaramayonashackleton:uniformly,mangahas:uniform}, whereas for cubical groups, this was only the case in low dimensions \cite{karsageev:uniform, guptajankiewiczng:groups} until the work of \cite{abbottngsprianoguptapetyt:hierarchically} in the setting of hierarchical hyperbolicity.

Although several aspects of hierarchically hyperbolic groups have been studied since their introduction,  such as dynamics on the boundary \cite{durhamhagensisto:boundaries},  acylindrical actions \cite{abbottbehrstockdurham:largest}, convexity properties \cite{russellsprianotran:convexity}, cubical structures \cite{hagenpetyt:projection}, and so on, the structure of the \say{generic} hierarchically hyperbolic group is far from being understood. This becomes more pronounced when considering group actions. For instance, there are well-known examples of CAT(0) cube complexes that do not admit a hierarchically hyperbolic structure, but these examples do not admit a group action. In fact, all known cocompact CAT(0) cube complexes are known to be hierarchically hyperbolic by a theorem of Hagen--Susse \cite{hagensusse:onhierarchical}. 

An extra layer of complication comes from the fact that the definition of hierarchically hyperbolic group requires more than just a geometric action on a hierarchically hyperbolic space, and it is not immediate whether the additional requirements are preserved under quasiisometries. Although it was generally agreed that this should not be the case, finding an example has proved challenging. The main reason for this is that there may be several different hierarchical structures on a group: even hyperbolic groups can be equipped with many hierarchically hyperbolic structures \cite{spriano:hyperbolic:1}. In other words, showing that a group cannot be equipped with the \say{natural} structure does not rule out the existence of exotic ones. 

The goal of this paper is to analyse the constraints on the hierarchically hyperbolic structure that stem from being a hierarchically hyperbolic group. As a consequence, we construct the first example of two quasiisometric groups where only one admits a hierarchically hyperbolic group structure.

Our main technical result is the existence of \emph{eyries}. Every hierarchically hyperbolic structure consists of a set \(\mf{S}\) of \emph{domains}, equipped with a partial order \(\nest\) that has a unique maximal element. With each domain is associated a hyperbolic space, and we say that a domain is \emph{unbounded} if its associated hyperbolic space is. A number of complications in the theory come from the fact that the \(\nest\)--maximal domain may be bounded, preventing geometric arguments that use the \(\nest\)--maximality. When looking only at the set of unbounded domains, however, we show that an analogous statement holds, namely that there is a finite collection of \(\nest\)--maximal elements, which we call \emph{eyries}. 

\numberedtheorem{Theorem~\ref{thm:eyries}}{Less-general version}
{Let $(G,\s)$ be a hierarchically hyperbolic group. Then there is a finite, $G$--invariant set $\Ce(G)=\{W_1,\dots,W_n\}\subset\s$ of pairwise orthogonal unbounded domains such that every other unbounded domain \(U \in \mf{S}\) is nested in some \(W_i\).}

\noindent\textbf{Results on the structure.}
As a first consequence of Theorem \ref{thm:eyries}, we can build upon the study of domains that are quasilines, carried out in \cite{abbottngsprianoguptapetyt:hierarchically}, to obtain the following.

\numberedtheorem{Corollary~\ref{bluff}}{}{Let $(G,\mf S)$ be a hierarchically hyperbolic group. Then \(G\) is virtually abelian if and only if each domain is either bounded or quasiisometric to a line.}

We thus obtain that there is no hierarchically hyperbolic group structure on a nonabelian free group that consists only of bounded domains and quasilines. As a measure of our understanding of the generic hierarchically hyperbolic structure, even such an innocent statement on a very well-understood group was previously unknown. 

Under an additional hypothesis called \emph{hierarchical acylindricity}, Theorem \ref{thm:eyries} was deduced from the omnibus subgroup theorem by Durham--Hagen--Sisto \cite[Cor.~9.23]{durhamhagensisto:boundaries}. There are two advantages to our approach. Firstly, by proving the theorem directly we can significantly simplify the proof. Secondly, dropping the hierarchical acylindricity hypothesis gives us restrictions that \emph{all} hierarchically hyperbolic group structures need to satisfy. For instance, the following is a simple consequence of Theorem~\ref{thm:eyries}.

\numberedtheorem{Corollary~\ref{torsion}}{}{Infinite torsion groups are not hierarchically hyperbolic groups.}

The role of torsion in proving that a group is not hierarchically hyperbolic is a leitmotif of this paper. Indeed, Theorem~\ref{thm:eyries} can be used to obtain information about the interaction between torsion elements and the  hierarchically hyperbolic group structure. This is particularly interesting because it is one of the first non-coarse tools that we have for proving that a group cannot be equipped with a hierarchically hyperbolic group structure. 

\numberedtheorem{Theorem~\ref{thm:crystallographic}}{}{A crystallographic group $G<\Isom\R^n$ admits a hierarchically hyperbolic group structure if and only if its point group embeds in $\OO_n(\Z)=\Z_2^n\rtimes\Sym(n)$.}

As well as being interesting in its own right, we see Theorem~\ref{thm:crystallographic} as a blueprint to prove that a specific group does not admit a hierarchically hyperbolic group structure. As a corollary, we obtain the first example of a virtually hierarchically hyperbolic group that is not hierarchically hyperbolic. 

\numberedtheorem{Corollary~\ref{trianglegroup}}{}{
The $(3,3,3)$ triangle group is virtually a hierarchically hyperbolic group (and, in particular, it is a hierarchically hyperbolic space), but is not a hierarchically hyperbolic group.}

Since the $(3,3,3)$ triangle group is a finite extension of the hierarchically hyperbolic group $\Z^2$, this gives a strong answer to the question of quasiisometric invariance of hierarchically hyperbolic groups: although the property of being a hierarchically hyperbolic \emph{space} is preserved by quasiisometries, not only is the property of being a hierarchically hyperbolic \emph{group} not preserved by quasiisometries, it is not even preserved by taking finite extensions. 

Moreover, since the direct product of the $(3,3,3)$ triangle group with $\Z$ acts properly cocompactly on the standard cubing of $\R^3$, it is hierarchically hyperbolic. Corollary~\ref{trianglegroup} therefore shows that direct factors of hierarchically hyperbolic groups need not be hierarchically hyperbolic.

Since the first appearance of this paper, it has become possible to deduce Theorem~\ref{thm:crystallographic} from \cite{haettelhodapetyt:coarse} and the arguments of \cite{hoda:crystallographic}, but the argument given here is considerably more direct.

\medskip
\noindent\textbf{Rank-rigidity and the omnibus subgroup theorem.}

The \emph{rank-rigidity} conjecture for groups acting on CAT(0) spaces was confirmed for CAT(0) cube complexes by Caprace--Sageev \cite{capracesageev:rank}. This result was later shown to hold for a large class of groups acting on hierarchically hyperbolic spaces, including all hierarchically hyperbolic groups, by Durham--Hagen--Sisto \cite{durhamhagensisto:boundaries}. In both cases, the proofs are quite complicated, the latter using measure theory on the \emph{HHS boundary}. In the same paper, Durham--Hagen--Sisto prove the \emph{omnibus subgroup theorem} for many groups acting on hierarchically hyperbolic spaces, including compact special groups and mapping class groups. Again, the proof is rather involved. For mapping class groups, this theorem was originally formulated by Mosher to consolidate several important theorems, including that of Birman--Lubotzky--McCarthy \cite{birmanlubotzkymccarthy:abelian} and that of Ivanov \cite{ivanov:subgroups}; Mangahas gives a lucid discussion of this in \cite{mangahas:recipe}. 

As was mentioned earlier, Theorem~\ref{thm:eyries} can be deduced from the omnibus subgroup theorem, which was previously only known to hold in the case where the group is hierarchically acylindrical. It turns out that the logic can be reversed, and Theorem~\ref{thm:eyries} can be used to prove the rank-rigidity theorem (Corollary~\ref{rankrigidity}) and the omnibus subgroup theorem (Corollary~\ref{omnibus}), allowing the hierarchical acylindricity hypothesis for the latter to be relaxed. This approach leads to proofs that use much simpler tools and are significantly shorter. (It should be noted that the results of \cite{durhamhagensisto:boundaries} hold in a slightly more general setting than that of hierarchically hyperbolic groups.) 

The version of rank-rigidity that we state is slightly sharper than the original, and it allows us to cleanly deduce the following fact, which could be obtained from results in the literature but has not previously been recorded.

\numberedtheorem{Corollary~\ref{coro:qi_invariant}}{}{If $G$ and $H$ are quasiisometric HHGs, then $G$ is acylindrically hyperbolic if and only if $H$ is.}

We remark that the omnibus subgroup theorem is a very deep result, and even a simpler proof still presents some amount of technical involvement. In particular, our proof uses a much more general version of Theorem \ref{thm:eyries}, in which we extend the result to subgroups.

\numberedtheorem{Theorem~\ref{thm:subeyries}}{}{Let \((G, \mf{S})\) be a hierarchically hyperbolic group and \(H \leq G\) be any subgroup. Then there is a finite $H$--invariant set $\Ce(H)=\{W_1,\dots,W_n\}$ of pairwise orthogonal domains with unbounded $H$--projection such that every $U$ with $\pi_U(H)$ unbounded is nested in some $W_i$. Moreover, if $H$ is finitely generated and infinite, then $\mc{E}(H)$ is nonempty.}

Obtaining a result about all subgroups of a group usually requires some amount of technical work, and  Theorem~\ref{thm:subeyries} is no exception. In particular, the subgroup considered may be heavily distorted, and it may have exotic interaction with the hierarchical structure. For this reason, the proof of Theorem~\ref{thm:subeyries} forms the most involved part of the paper, although the techniques used are, in essence, elementary. 

Apart from providing a simpler proof of the omnibus subgroup theorem, we expect more applications to stem from a better understanding of subgroups of hierarchically hyperbolic groups. For instance, Theorem~\ref{thm:subeyries} can be used to simplify part of the proof of the Tits alternative for hierarchically hyperbolic groups \cite{durhamhagensisto:correction}. It could also be useful for proving results on \emph{uniform uniform} exponential growth similar to those for mapping class groups \cite{mangahas:uniform} and square complexes \cite{karsageev:uniform}. 

\bigskip
\noindent\textbf{Acknowledgements.}

We would like to thank our respective supervisors, Mark Hagen and Alessandro Sisto, for their ongoing support and their many useful comments and suggestions. We are grateful to Jason Behrstock for helpful comments, to Jacob Russell for a useful suggestion on infinite torsion groups, and to Anthony Genevois for pointing us to \cite{clayuyanik:simultaneous}. We are particularly grateful for the care with which the referee read the manuscript, and for several comments that improved the exposition. The first author was supported by an EPSRC DTP at the University of Bristol, studentship 2123260. The second author was partially supported by the Swiss National Science Foundation (grant \#182186).

\section{Background} \label{section:background}

The definitions of \emph{hierarchically hyperbolic spaces} and \emph{hierarchically hyperbolic groups} are rather technical, and we refer the reader to \cite[Def.~1.1,~1.21]{behrstockhagensisto:hierarchically:2} for a complete account. Roughly, an HHS is a pair $(X,\s)$, where $X$ is an $E$--quasigeodesic space and $\s$ is a set, with some extra data. The important structure for us is as follows. 
\begin{itemize}
\item   For each \emph{domain} $U\in\s$ there is an $E$--hyperbolic space $\C U$ and a \emph{projection} map $\pi_U:X\to\C U$ that is coarsely onto \cite[Rem.~1.3]{behrstockhagensisto:hierarchically:2} and coarsely Lipschitz.
\item   $\s$ has a partial order $\nest$, called \emph{nesting}, and a symmetric relation $\bot$, called \emph{orthogonality}. Nest-chains are uniformly finite, and the length of the longest such chain is called the \emph{complexity} of $(X,\s)$. This also bounds the size of a set of pairwise orthogonal domains. These relations are mutually exclusive, and the complement of $\nest$ and $\bot$ is called \emph{transversality} and denoted~$\trans$.
\item   There is a bounded set $\rho^U_V\subset \C V$ whenever $U\trans V$ or $U\propnest V$. These sets, and projections of elements $x\in X$, are \emph{consistent}, in the sense that $\rho^U_W$ coarsely agrees with $\rho^V_W$ whenever $U\propnest V$ and $\rho^V_W$ is defined (this will be referred to as $\rho$--consistency), and $\min\{\dist_{\C U}(\pi_U(x),\rho^V_U),\dist_{\C V}(\pi_V(x),\rho^U_V)\}$ is bounded whenever $U\trans V$.
\end{itemize}
All coarseness in the above can be taken to be uniform \cite[Rem.~1.6]{behrstockhagensisto:hierarchically:2}. We fix a uniform constant $E$. For a domain $U$, we write $\s_U$ for the subset $\{V\in\s:V\nest U\}\subset\s$.

If $X$ is the Cayley graph of a finitely generated group $G$, then $(G,\s)$ is an HHG structure if it also satisfies the following regulating assumptions. (The exact equivariance described here is not part of the original definition, but it can always be assumed to hold by \cite[\S2.1]{durhamhagensisto:correction}.)
\begin{shortitem}
\item   $G$ acts cofinitely on $\s$, and the action preserves the three relations. 
\item   For each $g\in G$ and each $U\in\s$, there is an isometry $g:\C U\to\C gU$, and these isometries satisfy $g\cdot h=gh$.  
\item   There is equivariance of the form $g\pi_U(x)=\pi_{gU}(gx)$ and $g\rho^V_U=\rho^{gV}_{gU}$ for all $g,x\in G$ and all $U,V\in\s$ with $U\trans V$ or $V\propnest U$.
\end{shortitem}

Because the action of $G$ on $\s$ is cofinite, there are only finitely many isometry types of domains. In particular, there is a constant $B$ such that for any $U\in\s$, if $\diam\C U>B$, then $\diam\C U=\infty$. This is known as the \emph{bounded domain dichotomy}. After increasing $E$, we may assume that $E\ge B$.

More generally, we say that a group $G$ acts on an HHS $(X,\s)$ \emph{by HHS automorphisms} if $G$ acts on $X$ by isometries and satisfies the above three conditions, with the exception that the action on $\s$ need not be cofinite. Observe that if $G$ acts on the HHS $(X,\s)$ by HHS automorphisms, then the fact that the action preserves the relations on $\s$ means that we cannot have $gU\propnest U$ for any $g\in G$, $U\in\s$. Indeed, if $g$ has finite order then this would contradict the fact that $\nest$ is a partial order, and if $g$ has infinite order then this would contradict the fact that $\nest$--chains are finite.

An important feature of HHSs is that the distance between two points in the space can coarsely be recovered from their projections, using the \emph{HHS distance formula}, which we now state. For nonnegative numbers $p,q$, the quantity $\ignore{p}{q}$ is defined to be $p$ if $p\geq q$, and $0$ otherwise. It is conventional in the literature to write $\dist_U$ in place of $\dist_{\C U}$, and to suppress the map $\pi_U$ when measuring distances in $\C U$. For example, we write $\dist_U(x,\rho^V_U)$ in place of $\dist_{\C U}(\pi_U(x),\rho^V_U)$.

\begin{theorem}[Distance formula, {\cite[Thm~4.5]{behrstockhagensisto:hierarchically:2}}] \label{thm:df}
If $(X,\s)$ is an HHS, then for sufficiently large $s$ there are positive constants $A_s$ and $B_s$ such that, for any $x,y\in X$, we have 
\[-B_s+\frac{1}{A_s}\sum_{U\in\s}\ignore{\dist_U(x,y)}{s} \hspace{1mm}\leq\hspace{1mm} \dist_X(x,y) \hspace{1mm}\leq\hspace{1mm} B_s+A_s\sum_{U\in\s}\ignore{\dist_U(x,y)}{s}.\]
\end{theorem}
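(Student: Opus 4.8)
The plan is to prove the two inequalities separately, the lower bound being by far the more elementary. For the lower bound, fix a threshold $s$ that is large compared with $E$ and let $\gamma$ be a geodesic in $X$ from $x$ to $y$, so $\dist_X(x,y)$ is the length of $\gamma$. Each $\pi_U$ is uniformly coarsely Lipschitz, so $\pi_U\circ\gamma$ moves at bounded speed in $\C U$; the only thing to rule out is that too many domains $U$ contribute a term $\dist_U(x,y)\geq s$. This is precisely what the consistency inequalities prevent: for each such $U$, let $t_U$ be the first point of $\gamma$ at which $\pi_U\circ\gamma$ reaches the \say{midpoint} of a $\C U$--geodesic from $\pi_U(x)$ to $\pi_U(y)$, and use the transversality consistency inequality (Behrstock inequality) to see that for $U\trans V$ the relative order of $t_U$ and $t_V$ along $\gamma$ is dictated by which of $\rho^U_V,\rho^V_U$ is close to the corresponding projection. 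This produces a linear order on the contributing transverse domains that is compatible with $\gamma$, and along $\gamma$ consecutive ones are separated by a definite amount; since nest-chains and pairwise-orthogonal sets have size at most the complexity, the upshot is an estimate $\sum_{U\in\s}\ignore{\dist_U(x,y)}{s}\leq A_s\dist_X(x,y)+B_s$, in the spirit of the passing-up lemma.

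For the upper bound I would induct on the complexity of $(X,\s)$. When there is no proper nesting, $\s=\{S\}$ and the inequality is a special case of the uniqueness axiom from the full definition of an HHS (a bound on all projections bounds the distance in $X$). For the inductive step, let $S$ be the $\nest$--maximal domain, choose a geodesic $\sigma$ in $\C S$ from $\pi_S(x)$ to $\pi_S(y)$, and use that $\pi_S$ is coarsely onto to lift the unit-spaced points of $\sigma$ to a sequence $x=p_0,p_1,\dots,p_m=y$ in $X$ with $m\asymp\dist_S(x,y)$ and $\dist_S(p_{j-1},p_j)$ bounded. For fixed $j$, the consistency inequalities force every domain $U$ with $\dist_U(p_{j-1},p_j)$ above a uniformly chosen threshold to have $\rho^U_S$ in a bounded neighbourhood of $\pi_S(p_{j-1})$, and the orthogonal container axiom then confines all such $U$ to lie $\nest$ below a bounded number of domains whose complexity is strictly smaller than that of $S$; the inductive hypothesis applied to these bounds $\dist_X(p_{j-1},p_j)$ linearly in the corresponding partial sum of projection distances. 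Summing over $j$ and noting that each $U\propnest S$ is active only for a contiguous range of indices with bounded overlap --- again by the Behrstock inequality, now read off in $\C S$ --- the contributions telescope to give $\dist_X(x,y)\leq A_s\bigl(\dist_S(x,y)+\sum_{U\propnest S}\ignore{\dist_U(x,y)}{s}\bigr)+B_s\leq A_s\sum_{U\in\s}\ignore{\dist_U(x,y)}{s}+B_s$. As the complexity is finite the induction runs for boundedly many steps, so a single choice of $s$ (and of $A_s,B_s$) works throughout.

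I expect the upper bound to be the main obstacle, and within it the bookkeeping in the inductive step: one must (i) verify that between consecutive lifts the domains with large projection really are confined to a bounded number of strictly-lower-complexity domains --- this is where the orthogonal container and a \say{clean-up} of the consistency inequalities enter --- (ii) control the overlap of the index ranges on which a fixed domain is active, so that no projection distance is double-counted more than a bounded number of times, and (iii) propagate the constants through the induction so that the threshold $s$ can be taken uniformly. The lower bound, by contrast, reduces once and for all to the combinatorics of the Behrstock ordering together with the coarse Lipschitz estimate.
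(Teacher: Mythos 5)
First, a point of orientation: this statement is not proved in the paper at all --- it is quoted, with citation, from Behrstock--Hagen--Sisto (Theorem~4.5 of \cite{behrstockhagensisto:hierarchically:2}), so there is no in-paper proof to compare against and your sketch has to be measured against the lengthy argument in that reference. At the level of strategy your outline is recognisably the right one: the two inequalities are proved separately, the direction $\dist_X\lesssim\sum$ goes by induction on complexity working down from the $\nest$--maximal domain, and the direction $\sum\lesssim\dist_X$ rests on ordering the relevant domains via the consistency (Behrstock) inequality together with a passing-up-type count. But both halves have genuine gaps as written.

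For the upper bound, the central missing idea is the realisation theorem. Lifting the unit-spaced points of a geodesic $\sigma\subset\C S$ through the coarsely onto map $\pi_S$ produces points $p_j\in X$ whose projections to every \emph{other} domain are completely uncontrolled, so $\dist_X(p_{j-1},p_j)$ need not be bounded in terms of the partial sums you write down and the telescoping fails at the first step; the actual argument builds, for each $j$, a $\kappa$--consistent tuple interpolating between the coordinates of $x$ and $y$ and only then invokes Theorem~\ref{thm:realisation} to produce $p_j$. Relatedly, the claim that ``the orthogonal container axiom then confines all such $U$ to a bounded number of lower-complexity domains'' is not something that axiom can deliver: the tool you need is the \emph{large links} axiom (together with bounded geodesic image), which is what bounds, linearly in $\dist_S(p_{j-1},p_j)$, the number of strictly lower-level domains into which every $U$ with $\dist_U(p_{j-1},p_j)>E$ must nest; it is part of the full HHS definition but appears neither in your sketch nor in the paper's background summary. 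For the other inequality, the Behrstock total order exists only on a \emph{pairwise transverse} family, whereas $\{U:\dist_U(x,y)\geq s\}$ mixes all three relations: the disjoint-active-intervals argument handles a transverse chain, orthogonality costs only a factor of the complexity, but nested domains again require bounded geodesic image and an induction on level, and the bound on how many domains can exceed the threshold at all is exactly the content of the Passing-up Lemma, which you gesture at but do not actually run. None of this is fatal to the strategy --- these are precisely the places where the proof in the cited reference does its real work --- but as written the argument does not close.
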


We now recall (the relevant aspects of) three of the axioms appearing in the definition of HHSs.

\begin{axiom}[Bounded geodesic image]
Suppose that $U,W\in\s$ satisfy $U\propnest W$. If $y,z\in X$ have $\dist_U(y,z)>E$, then $\rho^U_W$ is $E$--close to every geodesic $[\pi_W(y),\pi_W(z)]\subset\C W$.
\end{axiom}

\begin{axiom}[Partial realisation]
If $\{W_i\}$ is a set of pairwise orthogonal domains, then for any tuple $(p_i)_i$, with $p_i\in\C W_i$, there is some $x\in X$ with $\dist_{W_i}(x,p_i)\le E$ for all $i$.
\end{axiom}

\begin{axiom}[Uniqueness]
For each $\kappa$ there exists $\theta_u=\theta_u(\kappa)$ such that if $x,y\in X$ satisfy $d(x,y)>\theta_u$, then there exists $U\in \mf{S}$ such that $d_U(x,y)>\kappa$.
\end{axiom}

\begin{defn}[Standard product region]
For $U\in\s$, the standard product region associated with $U$ is the set $\mathbf{P}_U=\{x\in X: \dist_V(x,\rho^U_V)\leq E \text{ \text{whenever} } U\trans V \text{ \emph{or} } U\propnest V\}$.
\end{defn}

Standard product regions can be given a natural (coarse) product structure \cite[\S5.2]{behrstockhagensisto:hierarchically:2}. The following, which is a simplified version of \cite[Prop.~2.27]{abbottngsprianoguptapetyt:hierarchically}, is an example of this. A domain $U\in\s$ is said to be \emph{unbounded} if its associated hyperbolic space $\C U$ is unbounded.

\begin{prop}[{\cite[Prop.~2.27]{abbottngsprianoguptapetyt:hierarchically}}] \label{prop:productregion}
If $(X,\s)$ is an HHG such that there is an unbounded domain $U$ with the property that every unbounded domain is either nested in $U$ or orthogonal to $U$, then $\mathbf P_U$ is coarsely equal to $X$. If there is an unbounded domain orthogonal to $U$, then $X$ is quasiisometric to a product of unbounded HHSs.
\end{prop}

We now give a pair of definitions for use in Sections~\ref{section:consequences} and ~\ref{section:subgroups}.

\begin{defn}[Morse]
A subset $Y$ of a quasigeodesic space $X$ is said to be Morse if there is a function $M:[1,\infty)\to \R$ such that every $\lambda$--quasigeodesic in $X$ with endpoints in $Y$ stays $M(\lambda)$--close to $Y$. If $X$ is a group and $Y$ is the cyclic subgroup generated by an element $g\in X$, then we say that $g$ is a Morse element. 
\end{defn}

Morse (also known as \emph{strongly quasiconvex}) subsets have been studied by a number of authors in several contexts, with differing levels of generality; for examples, see \cite{tran:onstrongly, genevois:hyperbolicities, kim:stable}.

\begin{defn}[Big-set]
Let $g$ be an element of a group $G$ acting on an HHS $(X,\s)$ by HHS automorphisms with a fixed basepoint $x\in X$. The big-set of $g$, written $\bigset(g)$, is the set of domains $U$ for which $\pi_U(\langle g\rangle \cdot x)$ is unbounded.
\end{defn}

The next lemma is a basic statement that will play an important role in this paper.

\begin{lemma}[Passing-up lemma, {\cite[Lem.~2.5]{behrstockhagensisto:hierarchically:2}}]\label{lem:passingUpLemma}
Let $(X,\s)$ be an HHS with constant $E$. For every $C>0$ there is an integer $p(C)$ such that if $V\in\s$ and $x,y\in X$ have $\dist_{U_i}(x,y)>E$ for every element of a set $\{U_1,\dots,U_{p(C)}\}\subset\s_V$, then there is a domain $W\in\s_V$ with some $U_i\propnest W$ and $\dist_W(x,y)>C$.
\end{lemma}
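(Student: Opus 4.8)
The plan is to argue by induction on the length of the longest $\nest$--chain contained in $\s_V$ (so the base case is $\s_V=\{V\}$, where the hypothesis is vacuous as soon as $P(C)\geq 2$, since there are then no two distinct domains to choose). Beyond the structure recalled above, the one ingredient I would use is the \emph{large links axiom} from the HHS definition: for any $W\in\s$ and any $x,y\in X$ there are at most $\lambda\dist_W(x,y)+\lambda$ domains $T_1,\dots,T_K$ with $T_j\propnest W$, such that every $U\propnest W$ with $\dist_U(x,y)>E$ is nested in some $T_j$. The first move is to dispose of the easy case: if $\dist_V(x,y)>C$, then (as $P(C)\geq 2$ and at most one $U_i$ can equal $V$) some $U_i$ is distinct from $V$, hence $U_i\propnest V$, and $W=V$ works.

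So suppose $\dist_V(x,y)\leq C$. Then $K\leq\lambda C+\lambda$ is bounded purely in terms of $C$ and the HHS, so applying the large links axiom at $V$ produces a bounded list $T_1,\dots,T_K\propnest V$ capturing every $U_i$ that is properly nested in $V$ --- and all but at most one of the $U_i$ are of that form. Each such $U_i$ therefore satisfies $U_i\nest T_{j(i)}$ for some $j(i)\in\{1,\dots,K\}$; at most $K$ of them can equal some $T_j$, so at least $P(C)-1-K$ of them are \emph{properly} nested in one of the $T_j$. By pigeonhole, at least $(P(C)-1-K)/K$ of them are properly nested in a single $V'\in\{T_1,\dots,T_K\}$. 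Since $V'\propnest V$, the longest $\nest$--chain in $\s_{V'}$ is strictly shorter than in $\s_V$; so if we set $P(C):=1+K+K\cdot P'(C)$, where $P'(C)$ is the constant supplied by the inductive hypothesis at the lower complexity, then this sub-collection has at least $P'(C)$ members, all properly nested in $V'$ and all with $\dist_{U_i}(x,y)>E$. Applying the inductive hypothesis inside $\s_{V'}$ yields $W\in\s_{V'}\subseteq\s_V$ with some $U_i\propnest W$ and $\dist_W(x,y)>C$, closing the induction.

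There is no geometrically deep step here: the whole argument is a pigeonhole run over the bounded-complexity poset $\s_V$, and the point worth appreciating is \emph{why} the large links axiom is the right tool --- it is precisely what converts ``$\dist_V(x,y)$ is small'' into ``there are only boundedly many candidate containing domains'', which is what makes the pigeonhole, and hence the recursion, terminate. If one preferred not to invoke large links, the same effect can be produced by hand from the bounded geodesic image axiom together with $\rho$--consistency: for each $i$ one checks that any geodesic from $\pi_V(x)$ to $\pi_V(y)$ in $\C V$ must pass uniformly close to $\rho^{U_i}_V$ --- otherwise $x$ and $y$ both project to $\C U_i$ through the downward map $\rho^V_{U_i}$, and then bounded geodesic image forces $\dist_{U_i}(x,y)$ to be small, contradicting the hypothesis --- so all the $\rho^{U_i}_V$ lie in a segment of length $\leq C$, and a similar count extracts the bounded family of $T_j$. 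Either route reduces the lemma to finite combinatorics; the main thing to get right is the bookkeeping ensuring the sub-collection passed down the recursion is still large enough.
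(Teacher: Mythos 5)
The paper does not prove this lemma---it is quoted directly from \cite[Lem.~2.5]{behrstockhagensisto:hierarchically:2}---and your argument is essentially the proof given there: induct on the level of $V$, dispose of the case $\dist_V(x,y)>C$ by taking $W=V$, and otherwise use the large links axiom to funnel the $U_i$ into at most $\lambda C+\lambda$ proper subdomains $T_j\propnest V$, then pigeonhole and recurse. Your bookkeeping $P(C)=1+K+K\,P'(C)$ is correct; the only cosmetic point is that this makes $P(C)$ depend a priori on the level of $V$, which is harmless since the complexity is finite and one simply takes the maximum over all levels.
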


We finish with a simple observation that will be used several times without reference.

\begin{lemma}[{\cite[Lem.~2.2]{behrstockhagensisto:hierarchically:2}}]
Every infinite set of domains of an HHS contains two domains that are transverse.
\end{lemma}

\section{Proof of the main theorem}

Here we prove the ``less-general version'' of the main result of the paper, Theorem~\ref{thm:eyries}. We remark that this weaker version is still good enough for many applications; from it we shall deduce most of the consequences listed in the introduction. Before stating the result, we define the \emph{transversality graph} of a subset of the index set of an HHS $(X,\s)$.

\begin{defn}[Transversality graph]
For a subset $\s'\subset\s$, the \emph{transversality graph} of $\s'$, denoted $\Gamma^\trans(\s')$, is the graph with vertex set $\s'$ and an edge joining $U,V$ whenever $U\trans V$. 
\end{defn}

Recall that a domain $U\in\s$ is said to be unbounded if its associated hyperbolic space is. In this section, we shall write $\bar\s$ to mean the set of all unbounded domains.

\begin{theorem}[Eyries] \label{thm:eyries}
Let $(G,\s)$ be an HHG. There is a finite, $G$--invariant set $\mathcal E(G)\subset\bar\s$ of pairwise orthogonal domains such that every $U\in\bar\s$ is nested in some element of $\mathcal E(G)$. We call the elements of $\mathcal E(G)$ the \emph{eyries} of $G$.
\end{theorem}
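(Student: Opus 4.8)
The plan is to take $\mathcal E(G)$ to be the set of $\nest$--maximal elements of $\bar\s$. That this set is $G$--invariant is immediate: the $G$--action preserves $\nest$ and, through the isometries $g\colon\C U\to\C gU$, preserves unboundedness, so it permutes the $\nest$--maximal unbounded domains. Since $\nest$--chains have length at most the complexity of $(G,\s)$, every element of $\bar\s$ is nested in one of these maximal elements. Finally, a set of pairwise orthogonal domains has size at most the complexity, so \emph{finiteness of $\mathcal E(G)$ is automatic once we know its elements are pairwise orthogonal}. Thus the whole theorem reduces to the assertion that two distinct $\nest$--maximal unbounded domains cannot be transverse.

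Before attacking that, I would dispose of an easy case. If $\C S$ is unbounded, where $S$ is the $\nest$--maximal domain of $\s$, then every element of $\bar\s$ is nested in $S$; in particular any eyry is nested in $S$ and, being $\nest$--maximal in $\bar\s$, equals $S$. So $S$ is the unique eyry and we are done. From now on $\C S$ is bounded --- this is exactly the situation the notion of eyry is designed to handle. Suppose then, for contradiction, that $W_1\trans W_2$ for distinct $\nest$--maximal unbounded domains $W_1,W_2$ (being $\nest$--incomparable, the only alternative was orthogonality).

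The strategy is to produce a single element $k\in G$ whose cyclic orbit has unbounded projection to both $\C W_1$ and $\C W_2$; since big--sets consist of pairwise orthogonal domains, this contradicts $W_1\trans W_2$. To build $k$ I would first locate, for each $i$, an element of $G$ acting loxodromically on $\C W_i$. When the orbit $G\cdot W_i$ is finite, $\stab(W_i)$ has finite index in $G$, and the equivariant coarse surjection $\pi_{W_i}\colon X\to\C W_i$ together with cofiniteness of $G\cdot W_i$ shows that $\stab(W_i)$ acts coboundedly on the unbounded hyperbolic space $\C W_i$; since a finitely generated group acting on a hyperbolic space with every element elliptic has a bounded orbit, $\stab(W_i)$ contains an element with unbounded orbit on $\C W_i$, and such an element acts loxodromically there (elements of an HHG do not act parabolically on the hyperbolic space of a domain). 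When $G\cdot W_i$ is infinite, Lemma~\ref{lem:transversetranslates} already exhibits a translate of $W_i$ transverse to $W_i$, again an eyry, which I would feed back into the argument. Granting an element $g$ loxodromic on $\C W_1$ and an element $h$ loxodromic on $\C W_2$: since big--sets are pairwise orthogonal and $W_1\trans W_2$, $g$ must have bounded orbit on $\C W_2$ and $h$ bounded orbit on $\C W_1$ (using the bounded domain dichotomy for cyclic orbits). A ping--pong argument carried out simultaneously in $\C W_1$ and $\C W_2$ --- working with high powers of $g$ and $h$ and, if necessary, a conjugate of one of them so that independent loxodromics are available --- then produces $k$ with unbounded orbit in both $\C W_1$ and $\C W_2$, the contradiction we want.

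I expect the genuinely hard part to be this last ping--pong: an elliptic isometry can commute coarsely with a given loxodromic (the ``reflection times translation'' phenomenon), so one must first guarantee enough loxodromics with distinct axes in each $\C W_i$, and treat separately the degenerate case where some $\C W_i$ is a quasi--line --- there the relevant obstruction is not ping--pong but the consistency inequality for $W_1\trans W_2$, which forbids points projecting far into both $\C W_1$ and $\C W_2$ and so pins down the geometry. The second delicate point is the infinite--orbit case, where $\stab(W_i)$ need not have finite index and the coboundedness argument above breaks down: one cannot simply replace $W_2$ by a translate of $W_1$, since that stays inside the same infinite orbit, and some extra input --- a counting argument via the passing--up lemma, or a direct analysis of the transversality graph $\Gamma^\trans(G\cdot W_i)$ --- is needed to rule it out.
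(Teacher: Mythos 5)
Your reduction is sound: taking $\mathcal E(G)$ to be the $\nest$--maximal elements of $\bar\s$, everything (finiteness, $G$--invariance, the nesting property) does follow once you know that no two such elements are transverse, and this in turn follows if one shows that any transverse pair of unbounded domains admits an unbounded domain strictly above one of them. The problem is that your proof of this key claim has a genuine gap, and it is exactly the gap you flag at the end: the case where $G\cdot W_i$ is infinite. There $\stab(W_i)$ has infinite index, the coboundedness argument producing a loxodromic on $\C W_i$ collapses, and replacing $W_2$ by a transverse translate of $W_1$ (via Lemma~\ref{lem:transversetranslates}) just hands you back an instance of the same problem. Since you are arguing by contradiction, you cannot assume a priori that the orbit of a $\nest$--maximal unbounded domain is finite --- that finiteness is a \emph{consequence} of the theorem, not an input. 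A secondary issue: the assertion that a finitely generated group acting on a hyperbolic space with all elements elliptic has bounded orbits is false (every group acts horocyclically on its combinatorial horoball); what saves you in the finite-orbit case is coboundedness, which rules out horocyclic actions, so you should invoke that rather than the elliptic-implies-bounded claim.

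The paper's proof avoids the orbit dichotomy entirely and never needs a loxodromic element. Given transverse unbounded $U\trans V$, the group action is used only to manufacture a long chain of translates $U_0\trans U_1\trans U_2\trans\cdots$ of $U$ and $V$ with the $\rho$--points $\rho^{U_{j\pm1}}_{U_j}$ spread far apart in each $\C U_j$ (Lemma~\ref{lemma:producingtransversedomains}, which works whether the relevant translate fixes $W$ or is transverse to it); consistency then forces two suitably chosen group elements to be far apart in every domain of the chain, and the Passing-up Lemma plus the bounded domain dichotomy produce an \emph{unbounded} domain in which some $U_j$ is properly nested (Proposition~\ref{prop:prebluff}). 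Finite complexity then terminates the climb. This is strictly more elementary than your route: your ping-pong step is essentially the simultaneous-loxodromic statement (Proposition~\ref{simultaneous}) underlying the omnibus subgroup theorem, i.e.\ the direction of logic the paper is deliberately reversing. If you want to salvage your approach, you would need to close the infinite-orbit case, and the natural tool for that is precisely the Passing-up argument above --- at which point you have reproduced the paper's proof.
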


\begin{proof}
For a non-singleton component $C$ of $\Gamma^\trans(\bar\s)$, Lemma~\ref{lem:componentshaverulers} produces a domain $W_C\in\bar\s$ with the property that every vertex of $C$ is properly nested in $W_C$. In particular, $W_C$ is not a vertex of $C$. After applying Lemma~\ref{lem:componentshaverulers} a finite number of times (at most the complexity), we obtain $W\in\bar{\mf S}$ that is not transverse to any other $V\in\bar{\mf S}$. Note that this implies that, for any $g\in G$, the domain $gW$ is not transverse to or properly nested in any $V\in\bar\s$.

Repeating for each non-singleton component of $\Gamma^\trans(\bar\s)$ and taking the $\nest$--maximal domains produced gives a collection of $W_i$. The $W_i$ must be pairwise orthogonal, so there can only be finitely many of them. Moreover, $G\cdot\{W_i\}$ is finite because any infinite set of domains contains a transverse pair. As every $V\in\bar\s$ is nested in some $W_i$, this shows that the set $\Ce(G)=\bigcup_iG\cdot\{W_i\}$ has the desired properties.
\end{proof}

\begin{remark} \label{rem:nonempty}
Note that if $G$ is infinite then $\bar\s$ is nonempty by the uniqueness axiom and the bounded domain dichotomy, so the proof of Theorem~\ref{thm:eyries} shows that $\mathcal E(G)$ is nonempty in this case. In particular, Proposition~\ref{prop:productregion} tells us that every infinite HHG is quasiisometric to the standard product region associated with any one of its eyries, and any HHG with more than one eyrie is quasiisometric to a product of unbounded HHSs. 
\end{remark}

It remains to prove Lemma~\ref{lem:componentshaverulers}. We begin by making an important observation about the interaction between the relations and group action on $\s$. The results of this section and of Section~\ref{section:consequences} only depend on condition~\ref{lptd:unbounded} of Lemma~\ref{lemma:producingtransversedomains}, and only with $H=G$. The reader who is primarily interested in those results should therefore feel free to ignore the more technical conditions~\ref{lptd:b} and~\ref{lptd:c}. 

Recall that a metric space $X$ is said to be \emph{$C$--connected} if for any pair of points $x$ and $y$ there is some sequence $x=x_0,x_1,\dots,x_n=y$ such that $\dist(x_i,x_{i+1})\le C$.

\begin{lemma}[Producing transverse domains] \label{lemma:producingtransversedomains}
Let $(G,\s)$ be an HHG of complexity $c$, let $H$ be a subgroup, and let $V,W\in\s$ have either $V\trans W$ or $V\propnest W$. Suppose that any one of the following three conditions is satisfied by the pair $(H,W)$:
\begin{enumerate}[a)]
\item   $\pi_W(H)$ is unbounded; \label{lptd:unbounded} 
\item   There exists $C>10E$ such that $\pi_W(H)$ is $C$--connected and $\diam(\pi_W(H))>10^{c+1}(C+\dist_W(\rho^V_W,H))$; \label{lptd:b}
\item   There is a constant $C'>10E$, a geodesic $\gamma\subset\C W$, and $3c$ points $\{z_1,\dots,z_{3c}\}\subset H$ such that each $\pi_W(z_i)$ is $C'$--close to $\gamma$, and $\dist_W(z_i,z_j)>10C'$ for all $i\neq j$. \label{lptd:c}
\end{enumerate}
Then there is some $h\in H$ such that either $hW=W$ and $\dist_W(\rho^V_W,\rho^{hV}_W)>10E$; or $hW\trans W$ and $\dist_W(\rho^V_W,\rho^{hW}_W)>10E$.
\end{lemma}

\begin{proof}
If either one of condition~\ref{lptd:unbounded} and condition~\ref{lptd:b} is satisfied, then we can find a sequence of elements $(x_i)_{i=0}^c\subset H$ such that $\dist_W(\rho^V_W,x_0)>20E$ and $\dist_W(\rho^V_W,x_i)>10\dist_W(\rho^V_W,x_{i-1})$ for all $i>0$. If $x_ix_j^{-1}W\bot W$ for every pair $(i,j)$, then we have $x_kx_i^{-1}W\bot x_kx_i^{-1}(x_ix_j^{-1}W)=x_kx_j^{-1}W$ for all $i$, $j$, and $k$. Since the size of any set of pairwise orthogonal domains is at most $c$, there is a pair $(i,j)$ such that either $x_iW=x_jW$ or $x_ix_j^{-1}W\trans W$, because no translate of $W$ can be nested in $W$.

If $x_iW=x_jW$, then the isometry $x_ix_j^{-1}:\C W\to\C W$ sends $\pi_W(x_j)$ to $\pi_W(x_i)$. Since $\rho^{x_ix_j^{-1}V}_W=x_ix_j^{-1}\rho^V_W$ we have, perhaps after swapping $i$ and $j$, that
\[
\dist_W\left(\rho^V_W,\rho^{x_ix_j^{-1}V}_W\right) \geq\dist_W(x_i,x_j)-2\dist_W(x_i,\rho^V_W) \geq8\dist_W(x_i,\rho^V_W)>10E.
\]
In this case we can take $h=x_ix_j^{-1}$.

If $x_ix_j^{-1}W\trans W$, then by consistency for $x_i\in H$, one of two things happens. One option is that $\dist_W\big(x_i,\rho^{x_ix_j^{-1}W}_W\big)\leq E$, in which case $\dist_W\big(\rho^V_W,\rho^{x_ix_j^{-1}W}_W\big) \geq\dist_W(\rho^V_W,x_i)-E>10E$, and we can again take $h=x_ix_j^{-1}$. The other option is that $\dist_{x_ix_j^{-1}W}\big(x_i,\rho^W_{x_ix_j^{-1}W}\big)\leq E$, in which case noting that $x_jx_i^{-1}\pi_{x_ix_j^{-1}W}(x_i)=\pi_W(x_j)$ gives
\[
\dist_W\left(\rho^V_W,\rho^{x_jx_i^{-1}W}_W\right) = \dist_W\left(\rho^V_W,x_jx_i^{-1}\rho^W_{x_ix_j^{-1}W}\right) \geq\dist_W(\rho^V_W,x_j)-E>10E,
\]
and the element $h=x_jx_i^{-1}\in H$ has the desired property. 

Finally, suppose that condition~\ref{lptd:c} is satisfied. If there is no triple $(i,j,k)$ such that $z_iW=z_jW=z_kW$, then we have $|\{z_iW\}|>c$, so the elements of this set are not pairwise orthogonal. Consequently, some $z_iz_j^{-1}W$ must be transverse to $W$. Since $\dist_W(z_i,z_j)>10C'>100E$, at least one of $\pi_W(z_i)$ and $\pi_W(z_j)$ is $40E$--far from $\rho^V_W$. As in the previous cases, we can then use consistency of that point to obtain the result with $h\in\{z_iz_j^{-1},z_jz_i^{-1}\}$.

On the other hand, if there is a triple $(i,j,k)$ such that $z_iW=z_jW=z_kW$, then the isometries $z_jz_i^{-1},z_kz_i^{-1}:\C W\to\C W$ send $\pi_W(z_i)$ to $\pi_W(z_j)$ and $\pi_W(z_k)$, respectively. By the assumptions on $\{z_1,\dots,z_{3c}\}$, at least one of these isometries must move $\rho^V_W$ by more than $C'>10E$. Thus an element of $\{z_jz_i^{-1},z_kz_i^{-1}\}$ has the desired property.
\end{proof}

If we start with a pair of transverse domains, then Lemma \ref{lemma:producingtransversedomains} can be applied repeatedly to obtain an infinite sequence of pairwise transverse domains.

\begin{lemma} \label{lem:repeatedly_producing_transverse_domains}
Let $(G,\s)$ be an HHG, let $H$ be a subgroup, and let $U_0,U_1\in\s$ be transverse. Suppose that each pair $(H,U_0)$ and $(H,U_1)$ satisfies at least one of the conditions of Lemma~\ref{lemma:producingtransversedomains}. There is a sequence $(U_j)_{j\in\mathbf N}\subset H\cdot\{U_0,U_1\}$ of domains such that $U_j\trans U_{j-1}$ and $\dist_{U_j}(\rho^{U_{j-1}}_{U_j},\rho^{U_{j+1}}_{U_j})>10E$ for all $j>0$. Furthermore, if $y\in H$ has $\dist_{U_0}(y,\rho^{U_1}_{U_0})>2E$, then for any $n\in\mathbf N$ there exists $z_n\in H$ such that $\dist_{U_j}(y,z_n)>8E$ for all $j\le n$.
\end{lemma}

\begin{proof}
The sequence $(U_j)$ is produced by an inductive application of Lemma~\ref{lemma:producingtransversedomains}. Let us show that $\dist_{U_j}(y,\rho^{U_{j-1}}_{U_j})\le E$ for all $j>0$. For $j=1$ this is just consistency, because $\dist_{U_0}(y,\rho^{U_1}_{U_0})>2E$. For the inductive step, the fact that $\dist_{U_j}(\rho^{U_{j-1}}_{U_j},\rho^{U_{j+1}}_{U_j})>10E$ implies that $\dist_{U_j}(y,\rho^{U_{j+1}}_{U_j})>9E$, and consistency gives the desired inequality for $j+1$. Now, since $U_n$ is an $H$--translate of $U_0$ or $U_1$, there exists $z_n\in H$ with $\dist_{U_n}(z_n,\rho^{U_{n-1}}_{U_n})>2E$. A similar argument to the above shows that $\dist_{U_j}(z_n,\rho^{U_{j+1}}_{U_j})\le E$ for all $j<n$. Combining these inequalities for $y$ and $z_n$ yields the result.
\end{proof}

A simple consequence of Lemma~\ref{lem:repeatedly_producing_transverse_domains} is that if we have two unbounded domains that are transverse, then we can produce another unbounded domain that is strictly higher up the $\nest$--lattice. Recall that $p(C)$ denotes the integer produced by the passing-up lemma (Lemma~\ref{lem:passingUpLemma}) for constant $C$. 

\begin{prop} \label{prop:prebluff}
Let \((G, \mf{S})\) be an HHG. If \(U, V \in \bar{\mf{S}}\) are transverse, then there exists \(T\in \bar{\mf{S}}\) such that either \(U\propnest T\) or \(V\propnest T\).
\end{prop}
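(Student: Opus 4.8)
The plan is to use Lemma~\ref{lemma:producingtransversedomains} repeatedly to build, out of the single pair $U\trans V$, an arbitrarily large family of unbounded domains that are all transverse to $V$ but whose $\rho$--projections to $\C V$ are pairwise far apart, and then to extract from (a family derived from) this one, via the passing-up lemma, a domain that is strictly $\nest$--above a $G$--translate of $U$ or of $V$. Translating back and invoking the bounded domain dichotomy will then produce the required $T\in\bar{\mf S}$.

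First I would apply Lemma~\ref{lemma:producingtransversedomains} with $V$ in the role of the unbounded domain ``$W$'' and $U$ in the role of ``$V$'' (legitimate since $U\trans V$ and $V\in\bar{\mf S}$). Re-examining its proof, one may take the auxiliary elements $(x_i)$ to project into $\C V$ at a distance from $\rho^U_V$ as large as desired, and the displayed inequality there then shows that the returned element $g$ satisfies $gV=V$ and $\dist_V(\rho^U_V,\rho^{gU}_V)$ as large as desired, or $gV\trans V$ and $\dist_V(\rho^U_V,\rho^{gV}_V)$ as large as desired. Either way we get a new unbounded domain $W_1$ ($=gU$, transverse to $gV=V$, or $=gV$), which is a translate of $U$ or of $V$, transverse to $V$, with $\rho^{W_1}_V$ arbitrarily far from $\rho^U_V$. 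Repeating with fast-growing target distances gives, for any $N$, domains $W_1,\dots,W_N\in\bar{\mf S}$, each a translate of $U$ or of $V$, each transverse to $V$, with the $\rho^{W_i}_V$ pairwise more than $100E$ apart in $\C V$ (hence distinct): at stage $N$ one picks the target distance so large that, by the triangle inequality, the new $\rho$--image stays far from every earlier one. Note also that, since $\nest$--chains are finite, a translate of $U$ distinct from $U$ is never $\nest$--comparable with $U$, and likewise for $V$.

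Taking $N=P(C)$ with $C$ large relative to $E$, I would then invoke the passing-up lemma to get a domain $W$ with some $W_i\propnest W$ and $\dist_W(x,y)>C$ for suitable $x,y\in X$; then $\diam\pi_W(\{x,y\})$ exceeds the bounded-domain-dichotomy constant of $G$, so $\C W$ is unbounded, and if $W_i$ equals $g\cdot U$ (respectively $g\cdot V$) then $g^{-1}W$ is an unbounded domain with $U\propnest g^{-1}W$ (respectively $V\propnest g^{-1}W$). The obstacle — and the real content of the argument — is that the transverse family $\{W_i\}$ cannot be fed into passing-up directly: $\rho$--consistency between $V$ and each $W_i$, together with the wide spread of the $\rho^{W_i}_V$, forces any pair of points of $X$ to have bounded $\C{W_i}$--distance for all but one $i$, so there is no $x,y$ witnessing $\dist_{W_i}(x,y)>E$ for enough $i$. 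The work is therefore to replace this transverse configuration by a \emph{nested} one below a single domain — I would use the realisation theorem to produce points lying inside the standard product regions $\mathbf P_{W_i}$, thereby transferring the large projections to domains properly nested in a common container, at which point passing-up genuinely applies — while also checking that both alternatives of Lemma~\ref{lemma:producingtransversedomains} feed uniformly into this and that the domain produced strictly contains, rather than equals, a translate of $U$ or of $V$. It is precisely this passage from a transverse family to one to which passing-up applies that I expect to be the main difficulty.
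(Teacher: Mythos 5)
There is a genuine gap here, and you have in fact located it yourself: your construction produces a \emph{star} --- a family $W_1,\dots,W_N$ all transverse to the single fixed domain $V$, with the sets $\rho^{W_i}_V$ spread out in $\C V$ --- and, as you correctly observe, consistency then forces any pair of points to have bounded projection to all but boundedly many of the $W_i$, so the Passing-up Lemma cannot be applied to this family. At that point the proposal stops being a proof: the suggested repair (using the realisation theorem and the product regions $\mathbf P_{W_i}$ to ``transfer'' the large projections into a common container) is not carried out, and you explicitly defer it as ``the main difficulty.'' Since everything before that point is a construction that you then discard, the argument as written does not establish the proposition.

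The paper avoids the obstruction entirely by building a \emph{chain} rather than a star. Starting from $U_0=U$ and $U_1=V$, one applies Lemma~\ref{lemma:producingtransversedomains} with $W=U_j$ and $V=U_{j-1}$ to produce $U_{j+1}$, a translate of $U$ or $V$ that is transverse to $U_j$ (not to the original $V$) and satisfies $\dist_{U_j}(\rho^{U_{j-1}}_{U_j},\rho^{U_{j+1}}_{U_j})>10E$. For such a chain the configuration is exactly the opposite of the star: choosing $y$ with $\dist_{U_0}(y,\rho^{U_1}_{U_0})>2E$ and $z$ with $\dist_{U_{P(E)}}(z,\rho^{U_{P(E)-1}}_{U_{P(E)}})>2E$, an inductive application of the consistency inequality pushes $\pi_{U_j}(y)$ onto $\rho^{U_{j-1}}_{U_j}$ and $\pi_{U_j}(z)$ onto $\rho^{U_{j+1}}_{U_j}$ for every intermediate $j$, so the separation of these two $\rho$--points gives $\dist_{U_j}(y,z)>E$ for \emph{all} $j\le P(E)$ simultaneously. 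The Passing-up Lemma then applies directly to $\{U_0,\dots,U_{P(E)}\}$ and the single pair $(y,z)$, yielding $W$ with some $U_j\propnest W$ and $\dist_W(y,z)>E\ge M(G)$, whence $W\in\bar{\mf S}$ and a translate of $W$ properly contains $U$ or $V$. No realisation theorem or product regions are needed. If you want to salvage your write-up, the fix is not to strengthen the separation of the $\rho^{W_i}_V$ (the ``as large as desired'' refinement of Lemma~\ref{lemma:producingtransversedomains} buys you nothing here) but to change which domain plays the role of the base at each step of the iteration.
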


\begin{proof}
Let $(U_j)$ be the sequence of domains provided by Lemma~\ref{lem:repeatedly_producing_transverse_domains}. Taking $n=p(E)$, we know that there exist $y,z\in G$ such that $\dist_{U_j}(y,z)>E$ for all $j\le p(E)$. By the passing-up lemma, there is some domain $W$ satisfying $\dist_W(y,z)>E$ in which some $U_j$ is properly nested. By the bounded domain dichotomy, $W\in\bar\s$. Since $U_j$ is a translate of $U$ or $V$, there is a translate $T$ of $W$ with the desired property. 
\end{proof}

We can now prove the lemma that we used in the proof of Theorem~\ref{thm:eyries}, which is a refinement of Proposition~\ref{prop:prebluff}.

\begin{lemma} \label{lem:componentshaverulers}
For each non-singleton, connected induced subgraph $C$ of $\Gamma^\trans(\bar\s)$, there exists $W_C\in\bar{\mf S}$ with $U\propnest W_C$ for all $U\in C^0$.
\end{lemma}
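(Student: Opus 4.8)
The plan is to first strengthen Proposition~\ref{prop:prebluff} to the symmetric statement that two transverse unbounded domains can be properly nested into a \emph{common} unbounded domain, and then to induct on the number of vertices of $C$.

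\textbf{Step 1: symmetric version of Proposition~\ref{prop:prebluff}.} I will show that if $U,V\in\bar{\mf S}$ are transverse then there is $T\in\bar{\mf S}$ with $U\propnest T$ \emph{and} $V\propnest T$. Fix a level function $\chi$ on $\mf S$ (say $\chi(U)$ is the length of the longest $\nest$--chain with top $U$), so that $A\propnest B$ implies $\chi(A)<\chi(B)$ and $\chi$ is bounded above by the complexity $c$; I argue by induction on the non-negative integer $2c-\chi(U)-\chi(V)$. Proposition~\ref{prop:prebluff} gives $T_1\in\bar{\mf S}$ with $U\propnest T_1$ or $V\propnest T_1$; relabelling if necessary, assume $U\propnest T_1$. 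If also $V\propnest T_1$ we are done, so suppose not; then $V\trans T_1$, because $V\neq T_1$ (else $U\propnest V$), $V\not\bot T_1$ (else $U\bot V$ by the nesting axiom, using $U\propnest T_1\bot V$), and $T_1\not\propnest V$ (else $U\propnest T_1\propnest V$ gives $U\propnest V$) — each contradicting $U\trans V$. Since $\chi(T_1)>\chi(U)$, the pair $(V,T_1)$ has a strictly smaller value of $2c-\chi(\cdot)-\chi(\cdot)$, so by induction there is $T\in\bar{\mf S}$ with $V\propnest T$ and $T_1\propnest T$; combined with $U\propnest T_1$ this gives $U\propnest T$. The base case is vacuous: a top-level domain is the unique $\nest$--maximal domain, which is transverse to nothing.

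\textbf{Step 2: induction on $|C^0|$.} If $|C^0|=2$, then $C$ is a single edge of $\Gamma^\trans(\bar\s)$, hence a transverse pair of unbounded domains, and Step~1 gives $W_C$. If $|C^0|\geq 3$, pick a vertex $v$ of $C$ that is a leaf of some spanning tree, so that the induced subgraph $C-v$ on $C^0\setminus\{v\}$ is still a non-singleton, connected induced subgraph of $\Gamma^\trans(\bar\s)$; by induction there is $W'\in\bar{\mf S}$ with $U\propnest W'$ for all $U\in(C-v)^0$. Let $u$ be a neighbour of $v$ in $C$, so $u\trans v$ and $u\propnest W'$. Exactly as in Step~1 one checks $v\neq W'$, $v\not\bot W'$, and $W'\not\propnest v$; hence either $v\propnest W'$, and $W_C=W'$ works, or $v\trans W'$, and Step~1 applied to $(v,W')$ gives $T\in\bar{\mf S}$ with $v\propnest T$ and $W'\propnest T$, so that every $U\in(C-v)^0$ satisfies $U\propnest W'\propnest T$ and $W_C=T$ works.

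The main obstacle is the termination of the induction in Step~1: a priori, passing from $(U,V)$ to $(V,T_1)$ might never reach the easy case $V\propnest T_1$. This is precisely where finiteness of the complexity is used, via the level function $\chi$, each step strictly raising $\chi$ of one domain. Everything else is elementary manipulation of the mutually exclusive relations $\nest,\bot,\trans$ together with the standard HHS nesting axiom ($A\nest B$ and $B\bot C$ imply $A\bot C$).
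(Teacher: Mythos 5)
Your Steps 1 and 2 reproduce the paper's argument for \emph{finite} $C$: Step 1 is a carefully bookkept version of the paper's base case (the paper just says ``by finite complexity the process terminates''; your potential function $2c-\chi(U)-\chi(V)$ and the three-way exclusion of $=$, $\bot$, $\propnest$ make that precise, and the relation-chasing is correct), and Step 2 is verbatim the paper's spanning-tree induction. So far so good.

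The gap is that the lemma is stated for an arbitrary non-singleton connected induced subgraph $C$ of $\Gamma^\trans(\bar\s)$, and $C$ may be \emph{infinite}: in a typical HHG (e.g.\ a mapping class group) the set $\bar\s$ of unbounded domains is infinite, and the transversality graph can certainly have infinite components. Your induction on $|C^0|$ never reaches such a $C$, and this is not a vacuous case --- Theorem~\ref{thm:eyries} applies the lemma to the non-singleton components of $\Gamma^\trans(\bar\s)$ with no finiteness guarantee. The fix uses only what you have already proved, but it is a separate argument and must be supplied. The paper's version: pick any finite connected $D\subset C$ with $|D|\geq 2$ and, using the finite case together with finite complexity, choose $W_D\in\bar{\mf S}$ which is $\nest$--maximal among domains containing all of $D$ properly nested. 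If some $V\in C$ were not properly nested in $W_D$, take a path in $C$ from $D$ to $V$ and let $U_i$ be its first vertex not properly nested in $W_D$; since $U_{i-1}\propnest W_D$ and $U_i\trans U_{i-1}$, the same relation-chasing as in your Step 1 forces $U_i\trans W_D$, and then Step 1 applied to the pair $(U_i,W_D)$ produces a strictly larger common container, contradicting the maximality of $W_D$. You should add this (or an equivalent stabilisation argument); as written, your proof only establishes the lemma under an unstated finiteness hypothesis on $C$.
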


\begin{proof}
Firstly, assume that \(C\) is finite. We shall prove the claim by induction on \(n = \vert C \vert\).
Suppose \(C = \{U_1,U_2\}\). By Proposition~\ref{prop:prebluff}, we can find \(T \in \bar\s\) such that one of the \(U_i\) is properly nested in $T$. We cannot have \(T \bot U_i\), for then we would have $U_1\bot U_2$. We are done if both $U_i$ are nested in $T$. Otherwise we can apply Proposition~\ref{prop:prebluff} again. By finite complexity, the process has to terminate, yielding \(W\) in which both \(U_1\) and \(U_2\) are nested. 

Now assume that the claim holds for subsets of size \(n-1\) and assume \(\vert C \vert = n\).
Since \(C\) is a finite connected graph, there is a leaf \(U_n\) of a spanning tree of \(C\), and \(C-\{U_n\}\) is still connected. By the induction hypothesis, there exists \(W\) such that every element of \(C-\{U_n\}\) is properly nested in \(W\). Note that we cannot have \(W \sqsubseteq U_n\) or \(W \bot U_n\). Hence either \(U_n \propnest W\), and we are done, or \(U_n \trans W\). In the latter case, we are done by considering \(C'= \{W, U_n\}\). 

Finally, assume that \(C\) is infinite. Let \(D\) be any finite connected subset of \(C\) of size at least 2, and let \(W_{D} \in \bar{\mf{S}}\) be a \(\sqsubseteq\)--maximal element in which all elements of \(D\) are nested. The existence of \(W_D\) is guaranteed by the previous step and finite complexity, as \(D\) is finite. Suppose that \(V \in C\) is not properly nested in \( W_D\). Consider a path \(U_0, U_1, \dots, U_n = V\) with \(U_0\in D\) and let \(U_i\) be the first vertex in the path that is not properly nested in \(W_D\). Since \(U_{i-1} \propnest W_D\) and \(U_i \trans U_{i-1}\), we need to have \(U_{i} \trans W_D\). Thus, there must be \(T \in \bar{\mf{S}}\) in which both \(U_{i}\) and \(W_D\) are properly nested, which contradicts the maximality of \(W_D\). 
\end{proof}

\section{Consequences} \label{section:consequences}

In this section we use our new-found understanding of eyries to deduce a number of structural results for HHGs. Our first two applications give an idea of how Theorem~\ref{thm:eyries} can be used to draw algebraic conclusions from the combinatorics of HHG structures.

\begin{coro} \label{torsion}
Infinite torsion groups are not HHGs.
\end{coro}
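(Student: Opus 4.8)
The plan is to derive a contradiction from the assumption that an infinite torsion group $G$ admits an HHG structure $(G,\s)$. The key tool is Theorem~\ref{thm:eyries}: since $G$ is infinite, Remark~\ref{rem:nonempty} gives a nonempty set $\Ce(G)=\{W_1,\dots,W_n\}$ of eyries, consisting of pairwise orthogonal unbounded domains such that every unbounded domain is nested in some $W_i$, and the whole set is $G$--invariant. The idea is that the stabiliser of the finite set $\{W_1,\dots,W_n\}$ has finite index in $G$, so it is still infinite (and still torsion); passing to it, we may as well assume $G$ permutes the eyries, and indeed, after a further finite-index reduction, fixes each $W_i$.

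Next I would fix one eyrie $W=W_1$, now with $G$ acting on the unbounded hyperbolic space $\C W$ by isometries via the homomorphism $g\mapsto(g\colon\C W\to\C W)$. The crucial point is that this action should be nonelementary-free in a useful sense: I claim no element of $G$ can act loxodromically on $\C W$, because a torsion element has finite order and hence bounded orbits on $\C W$. So every $g\in G$ acts on the hyperbolic space $\C W$ as an elliptic or parabolic isometry. Now I would use the distance formula together with the structure of $\bar\s$: because $W$ is $\nest$--maximal among unbounded domains, and the eyries are pairwise orthogonal, the projection $\pi_W\colon G\to\C W$ should be coarsely surjective and, combined with the fact that domains strictly below $W$ or orthogonal to $W$ contribute in a controlled way, one sees that if $\C W$ had bounded orbits for all of $G$ then $\pi_W(G)$ would be bounded — contradicting unboundedness of $W$ together with cofiniteness (bounded domain dichotomy) forcing $\pi_W(G)$ to be unbounded whenever $W\in\bar\s$. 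The cleanest route: since $W\in\bar\s$ and $G$ acts cofinitely, $\pi_U(G)$ is $E$--dense in $\C W$ for $U$ in the orbit of $W$; picking points far apart in $\C W$ and pulling back to $G$, the distance formula shows $G$ has elements moving the basepoint arbitrarily far, and one can arrange the dominant term to be the $\C W$--term, producing an element $g$ whose powers escape in $\C W$ — but $g$ has finite order, a contradiction.

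Alternatively, and perhaps more robustly, I would invoke the classification of groups acting on hyperbolic spaces: $G$ acts on the hyperbolic space $\C W$ with unbounded orbit (by the argument above), so by the standard trichotomy the action is either lineal, quasi-parabolic/horocyclic, or of general type; in the first and third cases there is a loxodromic element, contradicting torsion, and in the horocyclic (parabolic) case $G$ fixes a unique point of $\partial\C W$, which passes to the finite-index stabiliser as well — but then I would need to extract an element of infinite order differently. Here the honest move is to use that an infinite finitely generated torsion group cannot exist in a controlled situation, but $G$ need not be finitely generated a priori; however, any HHG is finitely generated by definition, so $G$ \emph{is} finitely generated, and then a finitely generated torsion group acting on a hyperbolic space with unbounded orbits must contain a loxodromic (a finitely generated torsion group has no quasiparabolic or general-type-without-loxodromic actions, as these would force infinite order elements via ping-pong). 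That loxodromic element has infinite order, contradicting that $G$ is torsion.

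The main obstacle I anticipate is ruling out the \emph{parabolic/horocyclic} case of the action on $\C W$ cleanly: showing that an unbounded orbit of a torsion group on a hyperbolic space must come from a loxodromic element. The resolution uses finite generation of $G$ (automatic for an HHG) together with the fact that a finitely generated group acting on a hyperbolic space either has bounded orbits, fixes a point at infinity, or contains a loxodromic; and the ``fixes a unique point at infinity'' case can itself be excluded because then $G$ would be (locally) amenable-by-... — more simply, a group fixing a point at infinity of a hyperbolic space with unbounded orbit surjects onto a subgroup of $\R$ (via the Busemann quasicharacter), which is torsion-free, so the image is trivial, forcing bounded orbits, a contradiction. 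Hence the loxodromic case is forced, and the contradiction with torsion closes the argument. I expect the write-up to be short once Theorem~\ref{thm:eyries} and the reduction to finite-index stabilisers are in hand.
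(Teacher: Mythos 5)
Your overall skeleton matches the paper's: eyries exist and are nonempty (Theorem~\ref{thm:eyries} and Remark~\ref{rem:nonempty}), a finite-index subgroup $H$ fixes an eyrie $W$ and hence acts by isometries on the unbounded hyperbolic space $\C W$, and one wants an element acting loxodromically, which would have infinite order. But the decisive step --- excluding the horocyclic case --- is where your argument has a genuine gap, and both of your proposed fixes fail. First, it is \emph{not} true that a finitely generated torsion group acting with unbounded orbits on a hyperbolic space must contain a loxodromic: the Grigorchuk group acts properly (hence with unbounded orbits) on its combinatorial horoball, which is hyperbolic with a single boundary point, so no element is loxodromic. The paper itself flags exactly this example at the end of Section 5. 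Second, the Busemann quasicharacter argument does not work: it is a \emph{homogeneous quasimorphism}, so it vanishes on every torsion element automatically, and for a horocyclic action it is identically zero while orbits are still unbounded; vanishing of the Busemann character does not force bounded orbits. Your ``cleanest route'' via the distance formula is also not a proof: producing elements that move the basepoint far in $\C W$ does not produce an element whose \emph{powers} escape, since such elements could all be elliptic or parabolic.

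The missing ingredient is coboundedness, which you actually have in hand but do not deploy at the right moment. Since $\pi_W\colon G\to\C W$ is coarsely onto (an HHS axiom, as recalled in Section~\ref{section:background}) and $H$ has finite index in $G$, the action of $H$ on $\C W$ is \emph{cobounded}. A horocyclic action never has coarsely dense orbits, so by the classification of Caprace--Cornulier--Monod--Tessera a cobounded action on an unbounded hyperbolic space is lineal, focal, or of general type, and in each of these cases there is a loxodromic element. That element has infinite order, contradicting torsion. This is precisely the paper's one-line conclusion; without the coboundedness step your argument does not close.
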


\begin{proof}
According to Theorem~\ref{thm:eyries}, if $G$ is an infinite HHG, then it acts on its finite (and nonempty) set of eyries $\mathcal E(G)$. The kernel of this action is a finite index subgroup $H$. Let $W\in\mathcal E(G)$ be an eyrie of $G$. Since $H$ fixes $W$, we have an isometric action of $H$ on $\C W$, which is an unbounded hyperbolic space. Since $\pi_W:G\to\C W$ is coarsely onto and $H$ has finite index in $G$, the action is cobounded. It follows that $H$ has an element acting loxodromically on $\C W$ \cite[\S3]{capracecornuliermonodtessera:amenable}, and this element must have infinite order.
\end{proof}

\begin{coro} \label{bluff}
The following are equivalent for an HHG $(G,\mf S)$.
\begin{itemize}
\item   $G$ is virtually abelian. 
\item   The eyries of $(G,\s)$ are all quasilines.
\item   The domains of $(G,\s)$ are all either bounded or quasilines.
\end{itemize}
Moreover, the rank of an abelian finite index subgroup of $G$ coincides with the number of eyries.  
\end{coro}

\begin{proof}
First suppose that $G$ is virtually abelian. Given an HHG structure for $G$, the kernel $K$ of the action of $G$ on $\Ce(G)$ has finite index in $G$. Since $\pi_W:K\to\C W$ is coarsely onto for each eyrie $W$, the action of $K$ on $\C W$ is cobounded. If some $\C W$ is not a quasiline, then $K$ acts coboundedly on the nonelementary hyperbolic space $\C W$, so contains a nonabelian free subsemigroup by \cite[Prop.~3.2, Lem.~3.3]{capracecornuliermonodtessera:amenable}: a contradiction. Thus the eyries of $(G,\s)$ are all quasilines. 

Now suppose that the eyries of $(G,\s)$ are all quasilines. Again, a finite index subgroup $K$ of $G$ acts coboundedly on $\C W$ for each eyrie $W$, so there is an element of $K$ acting loxodromically on $\C W$ by \cite[\S3]{capracecornuliermonodtessera:amenable}. According to \cite[Prop.~3.2]{abbottngsprianoguptapetyt:hierarchically}, this implies that every domain that is nested in an eyrie is bounded, so the domains of $(G,\s)$ are all either bounded or quasiline eyries. 

If the domains of $(G,\s)$ are all either bounded or quasilines, then the eyries are all quasilines, so the previous paragraph shows that the only unbounded domains are the eyries. The distance formula, Theorem~\ref{thm:df}, now gives a quasiisometry $G\to\Z^n$, so $G$ is virtually abelian \cite{shalom:harmonic}. 

The distance formula provides the statement about the rank. 
\end{proof}

\begin{coro}
If \((G, \mf{S})\) is an HHG that is not virtually abelian, then $G$ has an eyrie that is not a quasiline.  
\end{coro}

\subsection{Crystallographic groups} \label{subsection:crystallographic}

Recall that a \emph{crystallographic group} is a discrete subgroup of $\Isom \R^n$ that acts properly cocompactly. By Bieberbach's theorems \cite{bieberbach:bewegungsgruppen:1}, any such group $G$ fits into a short exact sequence of the form
\begin{align}  1\to H\to G\to F\to 1, \tag{$*$}\label{bieberbach} \end{align}
where $H$, called the \emph{translation subgroup}, is free abelian, and $F$, the \emph{point group}, is a finite subgroup of the orthogonal group $\OO_n(\R)$. For each $h\in H$ there is a constant $T$, called the \emph{translation length of $h$}, such that $\dist(v,hv)=T$ for all $v\in\R^n$.  By a theorem of Zassenhaus \cite{zassenhaus:algorithmus}, being crystallographic is equivalent to having a maximal abelian subgroup that is normal, free abelian, and finite index. 

\begin{theorem} \label{thm:crystallographic}
Let $G<\Isom\R^n$ be a crystallographic group. The following are equivalent.
\begin{shortenum}
\item   $G$ admits an HHG structure, \label{item:HHG}
\item   $G$ is cocompactly cubulated, \label{item:ccc}
\item   The point group of $G$ embeds in $\OO_n(\Z)=\Z_2^n\rtimes\Sym(n)$. \label{item:point}
\end{shortenum} 
\end{theorem}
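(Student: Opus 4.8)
The plan is to prove the cyclic chain $\ref{item:ccc}\Rightarrow\ref{item:HHG}\Rightarrow\ref{item:point}\Rightarrow\ref{item:ccc}$. The first implication is quick: a geometric action of $G$ on a CAT(0) cube complex $X$ exhibits $X$ as quasiisometric to $\R^n$, and in this virtually abelian setting $X$ carries a factor system, so $G$ is an HHG --- this case is among those covered by the theorem of Hagen--Susse \cite{hagensusse:onhierarchical}. The third implication is also comparatively soft, so the substance of the proof is the implication $\ref{item:HHG}\Rightarrow\ref{item:point}$.

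So suppose $(G,\s)$ is an HHG. As a crystallographic group $G$ is virtually $\Z^n$, so Corollary~\ref{bluff} tells us that the eyries $\Ce(G)=\{W_1,\dots,W_n\}$ are quasilines and are the only unbounded domains of $\s$. By the distance formula and Corollary~\ref{bluff}, the orbit map $g\mapsto\big(\pi_{W_1}(gx_0),\dots,\pi_{W_n}(gx_0)\big)$ is then a quasiisometric embedding of $G$ onto a $G$--invariant subset of the $\ell^1$--product $P=\prod_{i=1}^n\C W_i$ that is quasiisometric to $\Z^n$, and it is equivariant for the action of $G$ on $P$ permuting the factors via the action on $\Ce(G)$ and acting on the $i$th factor through the HHG isometry $\C W_i\to\C W_{gi}$. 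Recording, for each $g\in G$, this permutation of $\{W_1,\dots,W_n\}$ together with whether each $\C W_i\to\C W_{gi}$ preserves or reverses the coarse orientation of the quasilines involved defines a homomorphism $\psi\colon G\to\Z_2^n\rtimes\Sym(n)=\OO_n(\Z)$. Writing $H$ for the translation subgroup, so that $F=G/H$, it now suffices to prove $\ker\psi=H$.

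This equality is the crux, and it requires bridging the intrinsic geometry produced by the eyries with the extrinsic affine geometry of the $\R^n$--action. The affine action of $G$ on $\R^n$ is geometric, so composing a quasiinverse of its orbit map with the orbit map above gives a coarsely $G$--equivariant quasiisometric embedding $\R^n\to P$. Passing to asymptotic cones turns this into a $G$--equivariant bi-Lipschitz embedding $(\R^n,\lVert\cdot\rVert_2)\to(\R^n,\lVert\cdot\rVert_1)$, which is surjective by invariance of domain and which conjugates the induced actions of $G$ on the two cones. In the cone the translational parts of both actions disappear (each $\C W_i$ is a hyperbolic quasiline, so lies a bounded distance from a line, and its isometries act on the cone by translations and reflections), leaving $G$ acting on the left through its point group $F\subset\OO_n(\R)$ and on the right through $\psi$. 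Conjugate linear representations have the same kernel, so $\ker\psi=\ker(G\to F)=H$, and $\psi$ descends to an embedding $F\hookrightarrow\OO_n(\Z)$.

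For $\ref{item:point}\Rightarrow\ref{item:ccc}$, pick coordinates in which $F$ acts on the lattice $H\cong\Z^n$ by signed permutation matrices. Since $|F|$ annihilates $H^2(F;H)$, pushing the extension $1\to H\to G\to F\to 1$ out along $H\hookrightarrow\tfrac1{|F|}H$ produces a split extension, so $G$ sits as a finite-index subgroup of $\tfrac1{|F|}H\rtimes F\cong\Z^n\rtimes\OO_n(\Z)$. The group $\Z^n\rtimes\OO_n(\Z)$ is precisely the automorphism group of the standard cubical tiling of $\R^n$, on which it acts properly and cocompactly; hence so does $G$, and $G$ is cocompactly cubulated. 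The main obstacle is $\ref{item:HHG}\Rightarrow\ref{item:point}$, and specifically the equality $\ker\psi=H$: the eyries give a clean picture --- $n$ quasilines permuted with signs by $G$ --- but by themselves record nothing about how that picture is positioned relative to the affine structure, and it is the passage to asymptotic cones that erases the translations and pins the point group inside $\OO_n(\Z)$. One must also verify that $\psi$ is genuinely a homomorphism (using that an isometry of a quasiline has a well-defined effect on its two ends, compatibly with composition), and, for $\ref{item:ccc}\Rightarrow\ref{item:HHG}$, invoke the known criteria that upgrade cube complexes to HHG structures in the virtually abelian range.
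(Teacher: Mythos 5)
Your implications \ref{item:ccc}$\Rightarrow$\ref{item:HHG} and the construction of $\psi\colon G\to\OO_n(\Z)$ in \ref{item:HHG}$\Rightarrow$\ref{item:point} match the paper: it also invokes Hagen--Susse for the former, and obtains $\psi$ from the action of $G$ on the $n$ pairs of boundary points of the quasiline eyries supplied by Corollary~\ref{bluff}. Where you genuinely diverge is in identifying $\ker\psi$ with the translation subgroup $H$. The paper does this by a direct distance-formula computation: if $h\in H$ sent $W_i$ to $W_j\neq W_i$ (or flipped the ends of some $\C W_i$), then points $x_m$ far out in the dominion $\mathbf F_{W_i}$ would have $\dist_G(x_m,hx_m)\to\infty$, contradicting the finite translation length of $h$ on $\R^n$; injectivity of the induced map on $F$ is a similar comparison. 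You instead pass to asymptotic cones of the coarsely equivariant quasiisometry $\R^n\to\prod_i\C W_i$ and compare the two induced actions there. This is correct in substance --- it is the same comparison of displacement functions, packaged asymptotically --- but two steps need more care. First, ``conjugate linear representations have the same kernel'' is not the statement you are using: the conjugating map is only a bi-Lipschitz homeomorphism of cones. The kernel identification survives because conjugation by any bijection preserves kernels of actions ($A_g=\mathrm{id}$ iff $\psi(g)$ fixes the image of the cone map pointwise, which is everything by your surjectivity argument), but it should be phrased that way. Second, a quasiline need not lie at bounded distance from a geodesic line; what you actually need is that its asymptotic cone is an $\R$--tree homeomorphic to $\R$, hence isometric to $\R$, and that each fixed $g$ displaces the basepoint boundedly, so the induced isometry of each cone factor fixes the basepoint and is $\pm\mathrm{id}$ according to the sign recorded by $\psi$. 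With those repairs your argument works; the paper's route is more elementary in that it never leaves the distance formula.

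The one genuine gap is in your \ref{item:point}$\Rightarrow$\ref{item:ccc}. Condition \ref{item:point} is an \emph{abstract} embedding of the point group $F$ into $\OO_n(\Z)$, whereas your construction requires the holonomy representation $F\to\GL(H)\cong\GL_n(\Z)$ to act by signed permutation matrices in some basis of (a finite-index refinement of) $H$. These are not the same statement: a finite subgroup of $\GL_n(\Z)$ can be abstractly isomorphic to a group of signed permutation matrices without its given integral representation being conjugate to one. Bridging exactly this is part of the content of Hagen's theorem, which the paper simply cites for the equivalence of \ref{item:ccc} and \ref{item:point}. If you want a self-contained argument here, you must either show that the holonomy of such a crystallographic group can be put into signed-permutation form, or build the cube complex by other means; the cohomological splitting step is fine, but it does not address this point.
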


\begin{proof}[Proof of Theorem~\ref{thm:crystallographic}]
The equivalence of Items \ref{item:ccc} and \ref{item:point} is given by \cite[Thm~B]{hagen:cocompactly}. If $G$ is cocompactly cubulated, then since it is virtually abelian it is an HHG by \cite[\S8]{behrstockhagensisto:hierarchically:1}. We shall now show that if $G$ admits an HHG structure then its point group embeds in $\OO_n(\Z)$.

Given any HHG structure for $G$, Corollary~\ref{bluff} shows that the eyries of $G$ are all quasilines, and that every other domain is bounded. Since $G$ is quasiisometric to $\R^n$ there must be $n$ eyries. It follows that $G$ acts on a set of $n$ pairs of points, each pair being the boundary of an eyrie. This gives a homomorphism $G\to\Z_2^n\rtimes\Sym(n)=\OO_n(\Z)$.

In the notation of \eqref{bieberbach}, we now show that $H$ is in the kernel of this map. From this, it follows that there is a well-defined induced map $F\to\OO_n(\Z)$. We then show that this induced map is injective.

Let $\Ce(G)=\{W_1,\dots,W_n\}$, and assume that $h\in H$ has $hW_i=W_j$ with $i\neq j$. Let $\lambda>1$ be a quasiisometry constant for the orbit map of $G$ on $\R^n$, and let $T$ be the translation length of $h$. Let $s>E$ be sufficiently large to apply the distance formula, Theorem~\ref{thm:df}, and let $A_s$ and $B_s$ be the associated constants.

Since $\C W_i$ is unbounded and $W_i\bot W_j$, we can use the partial realisation axiom to find $x\in G$ with the property that $\dist_{W_i}(h^{-1},x) > A_s\lambda(T+\lambda)+E+s+B_s$ and $\dist_{W_j}(1,x)\leq E$. The distance formula with threshold $s$ gives 
\begin{align*}
A_s\hspace{0.5mm}\dist_G(x,hx)+B_s &\geq \sum_\s\ignore{\dist_U(x,hx)}{s} 
    \geq\ignore{\dist_{W_j}(x,hx)}{s} \\
&\geq\ignore{\dist_{W_j}(1,hx)-E}{s}
    \geq\dist_{W_i}(h^{-1},x)-E-s \\
&> A_s\lambda(T+\lambda)+B_s,
\end{align*}
so $\dist_G(x,hx)>\lambda(T+\lambda)$, which contradicts the fact that the translation length of $h$ is $T$. Thus $H$ fixes each $W_k$. 

If some $h\in H$ acts nontrivially on the boundary of $\C W_k$ then the distance formula again leads to a contradiction with the fact that $h$ has finite translation length on $\R^n$. This establishes that $H$ is in the kernel of the map from $G$ to $\OO_n(\Z)$.

A similar argument to the above shows that if an element $f\in F$ is in the kernel of the induced map then there is a bound on the distance $f$ moves any point in $\R^n$. This holds only when $f$ is the identity, as $F$ is a subgroup of $\OO_n(\R)$.
\end{proof}

Recall that the $(3,3,3)$ triangle group is the crystallographic group defined as $T=\langle a,b,c, \vert a^2=b^2=c^2=(ab)^3=(bc)^3=(ca)^3=1\rangle$.

\begin{coro} \label{trianglegroup}
The $(3,3,3)$ triangle group $T$ is virtually an HHG but not an HHG. In particular, the group $T$ is an HHS but not an HHG.
\end{coro}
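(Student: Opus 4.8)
The plan is to deduce this almost immediately from Theorem~\ref{thm:crystallographic}, after identifying the $(3,3,3)$ triangle group $T$ as a two-dimensional crystallographic group and pinning down its point group. First I would note that $T$ is a discrete, properly cocompact subgroup of $\Isom\R^2$: it is generated by the reflections in the sides of a Euclidean triangle all of whose angles equal $\pi/3$ (one could equally work with the orientation-preserving von Dyck subgroup), and either version acts properly cocompactly on the plane, hence is crystallographic in the sense of Section~\ref{subsection:crystallographic}. Its translation subgroup is the triangular lattice, isomorphic to $\Z^2$, and its point group $F$, namely its image in $\OO_2(\R)$, is a symmetry group of that lattice: for the full reflection group one gets $F\cong\Sym(3)$ of order $6$, and for the von Dyck subgroup $F\cong\Z_3$. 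In both cases $F$ contains an element of order $3$.

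Next I would observe that $F$ does not embed in $\OO_n(\Z)=\OO_2(\Z)=\Z_2^2\rtimes\Sym(2)$, simply because the latter group has order $8$ and so, by Lagrange's theorem, contains no element of order $3$, while $F$ does. Thus condition~\ref{item:point} of Theorem~\ref{thm:crystallographic} fails for $T$, and hence so does condition~\ref{item:HHG}: the group $T$ admits no HHG structure.

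For the positive assertions, I would use that the translation subgroup $\Z^2\leq T$ is of finite index (equal to $|F|$), and that $\Z^2$ is an HHG; indeed it is the basic example, and in particular it is cocompactly cubulated and virtually abelian, so \cite[Thm~A]{hagensusse:onhierarchical} applies. Therefore $T$ is virtually an HHG. Since $T$ is then quasiisometric to $\Z^2$, and being a hierarchically hyperbolic space is a quasiisometry invariant (as recalled in the introduction), $T$ is an HHS.

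I do not anticipate any real obstacle here: the substance of the argument is the classical determination of the point group of the $(3,3,3)$ triangle group together with the triviality that $3\nmid 8$. The only mild care needed is in verifying that $T$ genuinely is crystallographic (discreteness and cocompactness of its action on $\R^2$) and in being explicit about which incarnation of the triangle group is intended, though, as observed, both natural choices have a point group containing an order-$3$ element, so the statement is insensitive to this.
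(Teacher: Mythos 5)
Your argument is correct and is essentially the same as the paper's: both deduce the negative statement from Theorem~\ref{thm:crystallographic} via the observation that $T$ contains an element of order $3$ while $\OO_2(\Z)$ has order $8$, and both get the positive statement from the fact that the translation subgroup $\Z^2$ is a finite-index HHG. The paper's proof is just a terser version of yours (it does not bother to compute the point group explicitly, only noting that the order-$3$ element survives in it).
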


\begin{proof}
From \eqref{bieberbach}, we see that $T$ is virtually $\Z^2$, which is an HHG. By Theorem~\ref{thm:crystallographic}, if $T$ is an HHG then its point group embeds in $\Z_2^2\rtimes\Sym(2)$. However, $T$ contains an element of order 3.
\end{proof}

Corollary~\ref{trianglegroup} shows that the property of being an HHG does not pass to finite index overgroups, and in particular is not quasiisometry-invariant. This is in contrast with the property of being an HHS \cite[Prop.~1.10]{behrstockhagensisto:hierarchically:2}. It also shows that the property of being an HHG is strictly stronger than the property of admitting a proper cobounded action on an HHS. 

\begin{remark}
One can use Corollary~\ref{trianglegroup} to create acylindrically hyperbolic examples as well. The group $T*\Z$ acts properly cocompactly on the tree of flats, also known as the universal cover of the Salvetti complex of $\Z^2*\Z$. However, the HHS $T*\Z$ cannot be an HHG. Indeed, the flat-stabilising subgroup $T$ is Morse, and hence is hierarchically quasiconvex by \cite[Thm~6.3]{russellsprianotran:convexity}, so any HHG structure on $T*\Z$ would restrict to an HHG structure on $T$.
\end{remark}

\subsection{Rank-rigidity}

We now recover (a sharpened version of) the rank-rigidity result for HHGs of \cite{durhamhagensisto:boundaries}. The original proof involves doing measure theory on the \emph{HHS boundary}, but our combinatorial proof of Theorem~\ref{thm:eyries} allows us to avoid this, and is overall much simpler.

\begin{coro}[Rank-rigidity for HHGs] \label{rankrigidity}
Let $(G,\s)$ be an HHG. Then $G$ is either virtually cyclic, acylindrically hyperbolic, or has more than one eyrie. In particular, any infinite HHG either has a Morse element or is quasiisometric to a product of at least two unbounded HHSs and thus is wide in the sense of \cite{drutusapir:tree-graded}.
\end{coro}

\begin{proof}
If $G$ has no eyries then it is finite by Remark~\ref{rem:nonempty}. If $(G,\s)$ has more than one eyrie then it is quasiisometric to a product of unbounded HHSs by Proposition~\ref{prop:productregion}. Finally suppose that $G$ has a single eyrie, $\mathcal E(G)=\{W\}$. Then $(G,\s_W)$ is an HHG. By \cite[Thm~14.3]{behrstockhagensisto:hierarchically:1}, the action of $G$ on the hyperbolic space $\C W$ is acylindrical, so $G$ is either virtually cyclic or acylindrically hyperbolic. Moreover, $G$ has  a Morse element by \cite[Lem.~6.5, Thm~6.8]{dahmaniguirardelosin:hyperbolically} and \cite[Thm~1]{sisto:quasiconvexity}.
\end{proof}

Although the trichotomy presented in Corollary~\ref{rankrigidity} could be deduced from the original proof of rank-rigidity for HHGs and other theorems in the literature, to the best of the authors' knowledge this is the first time it has appeared in writing. In particular, we record the interesting consequence that acylindrical hyperbolicity is preserved by quasiisometries within the class of HHGs.

\begin{coro} \label{coro:qi_invariant}
If $G$ and $H$ are quasiisometric HHGs, then $G$ is acylindrically hyperbolic if and only if $H$ is. 
\end{coro}

\begin{remark}[Acylindrical action]
Although an HHG with more than one eyrie is not acylindrically hyperbolic, it turns out that it does admit an acylindrical action on a product of hyperbolic spaces. To see this, let $(G,\s)$ be an HHG with eyries $\mathcal E(G)=\{W_1,\dots,W_n\}$. The action of $G$ on $\mathcal E(G)$ induces an action of $G$ on the product $\prod_{i=1}^n\C W_i$, and a simple modification of the proof of \cite[Thm~14.3]{behrstockhagensisto:hierarchically:1} shows that this action is acylindrical. As the $W_i$ are pairwise orthogonal and the $\pi_{W_i}$ are coarsely onto, partial realisation ensures that this action is also cobounded.
\end{remark}

\section{Subgroups} \label{section:subgroups}

Let $H$ be a group acting on an HHS $(X,\s)$ by HHS automorphisms and fix a basepoint $x_0\in X$. Our goal for this section is to generalise Theorem~\ref{thm:eyries} so that its conclusion holds for $H$. We shall write $\bar\s{}^H$ to mean the set of domains $U\in\s$ for which the projection $\pi_U(H\cdot x_0)$ is unbounded. 

\begin{theorem}[Eyries for actions] \label{thm:subeyries}
For any group $H$ acting on an HHS $(X,\mf S)$ by HHS automorphisms there is a finite $H$--invariant set $\Ce(H)=\{W_1,\dots,W_n\}\subset\bar{\mf S}{}^H$ of pairwise orthogonal domains such that every $U\in\bar{\mf S}{}^H$ is nested in some $W_i$. We call the $W_i$ the \emph{eyries} of $H$. The set $\Ce(H)$ is nonempty if and only if $\bar\s{}^H$ is nonempty. If the image of $H$ in $\Sym(X)$ induced by the action is finitely generated and $H\cdot x_0$ is unbounded, then $\bar\s{}^H$ is nonempty, so $H$ has an eyrie. 
\end{theorem}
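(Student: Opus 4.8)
The plan is to mimic the strategy of Theorem~\ref{thm:eyries} as closely as possible, replacing the role of the HHG cofiniteness with whatever weaker structure survives when we only have a group $H$ acting by HHS automorphisms. The backbone of the HHG argument was: (i) a transverse pair of unbounded domains can be ``pushed up'' to a strictly higher unbounded domain (Proposition~\ref{prop:prebluff}); (ii) iterate this over components of the transversality graph to find domains transverse to nothing in $\bar\s$; (iii) take the $\nest$--maximal such domains and observe that pairwise orthogonality forces finiteness, while Lemma~\ref{lem:transversetranslates} forces the collection of translates to be finite too. The whole difficulty is reproducing step (i). The key technical input behind Proposition~\ref{prop:prebluff} was Lemma~\ref{lemma:producingtransversedomains}, whose proof used that $H$ acts coarsely transitively on $\C W$ via the $E$--coarse surjectivity of $\pi_W$ together with equivariance. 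For a general acting subgroup $H$ we do \emph{not} know that $\pi_W(H\cdot x_0)$ is coarsely onto $\C W$, only that it is unbounded --- this is precisely where $\bar\s{}^H$ (domains with unbounded $H$--projection) is the right replacement for $\bar\s$, and where the bounded domain dichotomy for the relevant orbit subsets has to be used carefully. So the hard part will be proving the analogue of Lemma~\ref{lemma:producingtransversedomains} for $H$, namely: if $W\in\bar\s{}^H$ and $V\trans W$ or $V\propnest W$, then some $h\in H$ moves $\rho^V_W$ a definite amount inside $\C W$ --- either by an element fixing $W$, or by producing a transverse translate $hW$ whose coordinate projection $\rho^{hW}_W$ is far from $\rho^V_W$. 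The section preamble of \S\ref{section:subgroups} explicitly warns that this is ``the most involved part of the paper'', so I expect the genuinely new work to live here: one picks a long sequence of $h_i\in H$ with $\dist_W(h_i\cdot x_0,\rho^V_W)$ growing geometrically (possible since $\pi_W(H\cdot x_0)$ is unbounded), forms the differences $h_ih_j^{-1}$, and runs the same orthogonality-versus-transversality dichotomy as before, using finite complexity to bound pairwise orthogonal sets; the isometry $h_ih_j^{-1}:\C W\to\C{h_ih_j^{-1}W}$ and consistency then do the rest, though one must be more delicate because $h_ih_j^{-1}W$ need not equal $W$ and the relevant orbit points must still be controlled.

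Granting that analogue, steps (i)--(iii) go through almost verbatim. The analogue of Proposition~\ref{prop:prebluff} produces, from a transverse pair $U,V\in\bar\s{}^H$, an infinite sequence of transverse translates $U_j$ with $\dist_j(\rho^{j-1}_j,\rho^{j+1}_j)>10E$, then picks $y,z$ in $H\cdot x_0$ far from the two ends, applies the Passing-up Lemma to find $W$ with $\dist_W(y,z)>E$ in which some $U_j$ is properly nested, and concludes $W\in\bar\s{}^H$ via the bounded domain dichotomy for $H\cdot x_0$ --- here one needs $H\cdot x_0$ to have the bounded domain dichotomy, which must either be assumed, arranged, or handled via the cyclic-subgroup version (\cite[Lem.~6.6]{durhamhagensisto:boundaries}) applied along a suitable sequence; this is a point to be careful about. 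Then the component-by-component induction of Lemma~\ref{lem:componentshaverulers} applies with $\bar\s$ replaced by $\bar\s{}^H$ throughout, giving for each non-singleton component $C$ of $\Gamma^\trans(\bar\s{}^H)$ a domain $W_C\in\bar\s{}^H$ properly containing all of $C^0$; after finitely many iterations (at most the complexity) we reach domains transverse to nothing in $\bar\s{}^H$, take the $\nest$--maximal ones $W_1,\dots,W_k$, note they are pairwise orthogonal hence finite in number, and set $\Ce(H)=\bigcup_i H\{W_i\}$, which is finite by Lemma~\ref{lem:transversetranslates} and $H$--invariant by construction. Every $U\in\bar\s{}^H$ is transverse, nested, or orthogonal to each $W_i$; transversality is excluded by maximality, and if $U$ were strictly above some $W_i$ that again contradicts maximality, so $U$ is nested in or orthogonal to each $W_i$ --- and the orthogonal container/complexity argument forces it to be nested in one of them (this is exactly the clean-up already present in the proof of Theorem~\ref{thm:eyries}).

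For the ``moreover'' clause, suppose $H$ has finitely generated image in $\Sym(X)$ and $H\cdot x_0$ is unbounded; I must show $\Ce(H)\neq\emptyset$, equivalently $\bar\s{}^H\neq\emptyset$. If $\bar\s{}^H$ were empty then every $\pi_U(H\cdot x_0)$ would be bounded, with a uniform bound over each $H$--orbit of domains; since $H$ acts on $\s$ through its finitely generated image and (by the HHS axioms) there are finitely many $\nest$--classes up to the action, a generating-set argument together with the bounded domain dichotomy gives a \emph{global} bound on $\diam\pi_U(H\cdot x_0)$ over all $U\in\s$, whence the distance formula forces $H\cdot x_0$ to be bounded --- a contradiction. (Concretely: each generator $s$ moves $x_0$ a bounded amount, hence moves its projection to each $U$ a bounded amount; combined with a uniform per-orbit diameter bound and finitely many orbits, $\diam\pi_U(H\cdot x_0)\le\text{const}$ for all $U$; then the right-hand side of the distance formula is uniformly bounded.) This reduction is short once the finitely generated hypothesis is in hand, so the only real obstacle remains the $H$--version of Lemma~\ref{lemma:producingtransversedomains}; everything downstream is bookkeeping parallel to \S\ref{section:subgroups}'s HHG counterpart.
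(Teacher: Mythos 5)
Your overall architecture (transversality graph, component induction, taking $\nest$--maximal elements, pairwise orthogonality forcing finiteness, Lemma~\ref{lem:transversetranslates} for the translates) matches the paper's, and you correctly locate the difficulty in the analogue of Proposition~\ref{prop:prebluff}. But the step you flag as ``a point to be careful about'' is a genuine gap, and none of your three suggested fixes closes it. After Passing-up produces a domain $W$ with $\dist_W(y,z)>E$ for two specific orbit points, you cannot conclude $W\in\bar\s{}^H$: an arbitrary subgroup (or orbit of an action) does \emph{not} satisfy the bounded domain dichotomy; ``assuming'' it changes the theorem; and the cyclic-subgroup version \cite[Lem.~6.6]{durhamhagensisto:boundaries} does not help because $W$ need not lie in the big-set of any cyclic subgroup of $H$. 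The paper's resolution (Proposition~\ref{prop:subprebluff}) is a genuine double induction: one runs Passing-up with an increasing sequence of thresholds; if no produced domain has unbounded $H$--projection, then infinitely many distinct domains appear, two of them are transverse, and one restarts the argument with this new pair, which has strictly larger level, so finite complexity terminates the process.

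That restart creates a second problem your sketch does not see. The new transverse pair only satisfies $\diam\pi_{W}(H)>D$ for a finite $D$, and since $H$ need not be finitely generated, $\pi_W(H)$ need not be coarsely connected; so your recipe of choosing points whose distances to $\rho^V_W$ grow geometrically --- which does work at the very first step, where the projections are genuinely unbounded --- cannot be carried out at later steps: a set of large diameter with no connectivity may contain no points at the intermediate scales. This is exactly why the paper needs the two-alternative Lemma~\ref{lemma:subproducingtransversedomains} and the $\rho$--distribution Lemma~\ref{lemma:rhodistribution}: the induction must carry along $3c$ points of $H$ projecting near a geodesic and pairwise well separated, and manufacturing those points is where most of the work lies. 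Your ``moreover'' argument has a related flaw: for a general HHS the action of $H$ on $\s$ need not be cofinite, so there are not finitely many orbits of domains and per-orbit diameter bounds do not globalize. The paper instead (Proposition~\ref{prop:thereisaneyrie}) uses the uniqueness axiom to extract domains $V_n$ with $\diam\pi_{V_n}(H\cdot x_0)>n$, uses finite generation only to get uniform coarse connectivity of the projections (the first alternative of Lemma~\ref{lemma:subproducingtransversedomains}), and then runs another Passing-up/finite-complexity iteration.
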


\begin{proof}
The proof of the existence of $\Ce(H)$ is the same as that of Theorem~\ref{thm:eyries}, except we use Lemma~\ref{lem:subcomponentshaverulers} in place of Lemma~\ref{lem:componentshaverulers}. The fact that $\Ce(H)$ is nonempty if and only if $\bar\s{}^H$ is nonempty is automatic from that proof. The criterion for $\Ce(H)$ to be nonempty is proved as Proposition~\ref{prop:thereisaneyrie}.
\end{proof}

As mentioned in the introduction, in the case where $H$ is \emph{hierarchically acylindrical}, Theorem~\ref{thm:subeyries} was proved by Durham--Hagen--Sisto as \cite[Cor.~9.23]{durhamhagensisto:boundaries}, where it is given as a corollary of the omnibus subgroup theorem. In contrast, we shall deduce the omnibus subgroup theorem from Theorem~\ref{thm:subeyries} in Section~\ref{subsection:omnibus}.

\begin{remark} \label{remark:hieromorphisms} 
In order to make the notation more comprehensible, we shall assume that $H$ is actually a subgroup of an HHG $(G,\s)$. In this case, we can take the basepoint to be $1$, so that $\bar\s{}^H$ denotes the set of domains $U$ for which $\pi_U(H)$ is unbounded. We stress that this has no effect on the arguments involved: the change is purely notational. In particular, the given proofs of Lemmas~\ref{lemma:producingtransversedomains}, \ref{lem:ensure_c_is_met}, and~\ref{lem:subcomponentshaverulers}, and of Propositions~\ref{prop:subprebluff} and \ref{prop:thereisaneyrie}, work equally well for groups acting on HHSs by HHS automorphisms. 
\end{remark}

We begin by proving a general structural result for HHSs. It shows that $\rho$--points of nested domains are ``well distributed''. One would like to say that not too many $\rho$--points can appear in a bounded set, but the $\rho$--consistency condition means that this is not true in general. This is not an issue if one considers only nest-maximal domains, however. This necessary restriction accounts for much of the apparent technicality of the statement of Lemma~\ref{lemma:rhodistribution}. 

The way we shall use the lemma will be in order to find a collection of points of $H$ that satisfy condition~\ref{lptd:c} from Lemma~\ref{lemma:producingtransversedomains}. That is, we will produce a large number of domains that are nested in a common one, and applying Lemma~\ref{lemma:rhodistribution} will show that there is a subset of these domains whose $\rho$--points are well separated. We then find a collection of points in $H$ that project close to these well-separated $\rho$--points.

Recall that $p(C)$ denotes the integer produced by the passing-up lemma (Lemma~\ref{lem:passingUpLemma}) for constant~$C$. 

\begin{lemma}[Distribution of $\rho$--points] \label{lemma:rhodistribution}
 If $(X,\s)$ is an HHS, then the following holds for all $D>50E$. Let $y,z\in X$ and $W\in\s$ have $\dist_W(y,z)\ge D$, and suppose that $U_1,\dots,U_n\in\s_W\smallsetminus W$ satisfy $\dist_{U_i}(y,z)>3E$ and have $\dist_V(y,z)<D$ for all $V$ with $U_i\propnest V\propnest W$ for some $i$. If $n\geq p(D+2E)$, then $\diam(\bigcup^n_{i=1}\rho^{U_i}_W)\ge D-30E$.
\end{lemma}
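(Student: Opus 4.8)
The plan is to argue by contradiction. Suppose $\diam\big(\bigcup_{i=1}^n\rho^{U_i}_W\big)\le D-30E$; I will play this off against $\dist_W(y,z)>D$ and $n\ge P(D+2E)$.

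\emph{Locating the $\rho$-points.} Fix a geodesic $\gamma$ in $\C W$ from $\pi_W(y)$ to $\pi_W(z)$, of length greater than $D$. For each $i$, applying the consistency inequalities for the pair $U_i\propnest W$ to $y$ and to $z$, together with $\dist_{U_i}(y,z)>3E$, shows that $\rho^{U_i}_W$ lies within $E$ of $\gamma$: if one of $\dist_W(y,\rho^{U_i}_W),\dist_W(z,\rho^{U_i}_W)$ is at most $E$ then $\rho^{U_i}_W$ is $E$-close to an endpoint of $\gamma$, and otherwise $\pi_{U_i}(y)\approx_E\rho^W_{U_i}(\pi_W(y))$ and $\pi_{U_i}(z)\approx_E\rho^W_{U_i}(\pi_W(z))$ differ by more than $E$, so the bounded geodesic image axiom forces $\gamma$ to pass within $E$ of $\rho^{U_i}_W$. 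Writing $\rho^{U_i}_W$ within $E$ of $\gamma(t_i)$, the diameter bound confines all the $t_i$ to a subinterval of $\gamma$ of length at most $D-28E$; since $\gamma$ has length exceeding $D$, after interchanging $y$ and $z$ if necessary we obtain $\dist_W(z,\rho^{U_i}_W)>10E$ for every $i$, and consistency for $z$ then upgrades this to $\pi_{U_i}(z)\approx_E\rho^W_{U_i}(\pi_W(z))$ for \emph{every} $i$.

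\emph{Closing the argument.} The configuration now in place says that, in each coordinate $\C U_i$, the point $z$ lies in the direction of $\rho^{U_i}_W$ --- precisely the regime in which the $U_i$-coordinate can be perturbed without disturbing consistency relative to $W$. Using the realisation theorem I would build a point $z'$ that agrees coarsely with $z$ on $\C W$ and on every domain $V$ with $U_i\propnest V\propnest W$ for some $i$, but whose coordinate in each $\C U_i$ has been moved off $\pi_{U_i}(z)$, so that $\dist_{U_i}(z,z')>E$ for all $i$ while $z'$ stays a bounded $X$-distance from $z$. Applying the Passing-up Lemma to $\{U_1,\dots,U_n\}\subset\s_W$ with the pair $(z,z')$ at threshold $D+2E$ --- legitimate since $n\ge P(D+2E)$ --- returns a domain $W^*\in\s_W$ that properly contains some $U_i$ and has $\dist_{W^*}(z,z')>D+2E$. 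But any $W^*\in\s_W$ properly containing some $U_i$ is either $W$ itself or a domain $V$ with $U_i\propnest V\propnest W$, and on all of these $z'$ was built to coincide coarsely with $z$; so $\dist_{W^*}(z,z')$ is bounded, a contradiction.

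\emph{The main obstacle} is making the construction of $z'$ precise: one must verify that the perturbed tuple is $\kappa$-consistent with a $\kappa$ that is controlled, and the hypothesis that $\dist_V(y,z)\le D$ for every intermediate domain $V$ is exactly what is needed for the consistency checks at the pairs involving such $V$ --- without it, $\rho$-consistency would force $\rho^V_W$ and $\rho^{U_i}_W$ to coincide for $U_i\propnest V\propnest W$, leaving no room to move the $U_i$-coordinates while fixing those of $W$ and of the $V$'s. One also has to keep every constant independent of $D$, since the statement is asserted for all $D>50E$; the comfortable slack of $30E$ in the conclusion is there to absorb the various $O(E)$ errors from consistency, bounded geodesic image, and realisation that accumulate along the way.
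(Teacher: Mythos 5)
Your first paragraph matches the paper's opening move exactly (contradiction, bounded geodesic image placing every $\rho^{U_i}_W$ within $E$ of $[\pi_W(y),\pi_W(z)]$, the diameter bound confining them to a short subsegment). The second half, however, diverges, and the divergence is where the gap is.

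The construction of $z'$ is not a deferrable technicality; it faces a quantitative obstruction. To keep the perturbed tuple $\kappa$--consistent at the pair $U_i\propnest W$ while leaving $b_W=\pi_W(z)$ fixed, note that you have arranged $\dist_W(z,\rho^{U_i}_W)>10E$; whenever this distance exceeds $\kappa$, the nesting inequality forces $\diam\bigl(b_{U_i}\cup\rho^W_{U_i}(\pi_W(z))\bigr)\leq\kappa$, i.e.\ $b_{U_i}$ must stay within roughly $\kappa+E$ of $\pi_{U_i}(z)$. On the other hand, the realisation theorem only returns a point $z'$ with $\dist_{U_i}(z',b_{U_i})\leq\theta_e(\kappa)$, so to guarantee $\dist_{U_i}(z,z')>E$ you must displace $b_{U_i}$ by more than $\theta_e(\kappa)+E$. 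These two requirements are compatible only if $\theta_e(\kappa)<\kappa$, which the axioms do not provide. Beyond this, consistency would have to be verified at every pair of domains (those nested in or transverse to the various $U_i$, and between the $U_i$ themselves), none of which is addressed. A structural warning sign: your final contradiction never actually invokes the hypothesis $\dist_V(y,z)\leq D$ for intermediate $V$ --- it appears only as a heuristic for why the perturbation should exist --- yet that hypothesis is essential to the truth of the lemma (a single intermediate $V$ with $\dist_V(y,z)$ large would, by $\rho$--consistency, force all the $\rho^{U_i}_W$ with $U_i\nest V$ into an $E$--neighbourhood of $\rho^V_W$). A correct proof must use it at the point of contradiction.

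The paper's route avoids realisation entirely: instead of pushing the $U_i$--coordinates of $z$ apart, it pulls the $W$--coordinates of $y$ and $z$ together. Using only coarse surjectivity of $\pi_W$, it chooses genuine points $x^-,x^+\in X$ whose images in $\C W$ sit on $[\pi_W(y),\pi_W(z)]$ just outside the cluster of $\rho$--points, so that $\dist_W(x^-,x^+)\leq D$, while bounded geodesic image gives $\dist_{U_i}(x^-,y)\leq E$ and $\dist_{U_i}(x^+,z)\leq E$, hence $\dist_{U_i}(x^-,x^+)>E$. Passing-up applied to $(x^-,x^+)$ produces $V\conest U_i$ with $\dist_V(x^-,x^+)>D+2E$; the bound $\dist_W(x^-,x^+)\leq D$ rules out $V=W$, and $\rho$--consistency plus bounded geodesic image transfer the estimate to $\dist_V(y,z)>D$, contradicting precisely the hypothesis on intermediate domains. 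I recommend you adopt this mechanism for the second half.
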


\begin{proof}
Let $U_1,\dots,U_n\in\s_W$ be as in the statement, but suppose that $\diam(\bigcup_{i=1}^n\rho^{U_i}_W) < D-30E$. By bounded geodesic image, each $\rho^{U_i}_W$ is $E$--close to a fixed geodesic $\gamma:[0,\dist_W(y,z)]\to\C W$ with $\gamma(0)=y$ and $\gamma(\dist_W(y,z))=z$. Choose $x^-,x^+\in X$ as follows. If $\dist_W(y,\rho^{U_i}_W)\leq 10E$ for some $i$ then let $x^-=y$. Otherwise, there exists $t$ such that $\dist(\gamma(t),\rho^{U_i}_W)\le10E$ for some $i$, but $\dist(\gamma(t'),\rho^{U_i}_W)>10E$ for all $i$ and all $t'<t$. Since $\pi_W$ is $E$--coarsely onto, we can choose $x^-\in X$ with $\dist_W(x^-,\gamma(t))\leq E$. Similarly define $x^+$ by considering the other end of $\gamma$. According to the assumption on the diameter of $\bigcup_{i=1}^n\rho^{U_i}_W$, we have that $\dist_W(x^-,x^+)< D$. 

Bounded geodesic image also shows that $\dist_{U_i}(y,x^-)\leq E$ and $\dist_{U_i}(z,x^+)\leq E$ for all $i$. In particular, $\dist_{U_i}(x^-,x^+)>E$. If $n\geq p(D+2E)$, then by the passing-up lemma there is some domain $V\in\s_W$ with $\dist_V(x^-,x^+)>D+2E$ and with $U_i\propnest V$ for some $i$. We have seen that $\dist_W(x^-,x^+)< D$, so $V\ne W$. By consistency we obtain $\dist_W(\rho^V_W,\rho^{U_i}_W)\leq E$, so the construction of $x^\pm$ allows us to use bounded geodesic image to find that $\dist_V(y,x^-)\leq E$ and $\dist_V(z,x^+)\leq E$. But now $\dist_V(y,z)\ge D$, which contradicts our assumptions.
\end{proof}

We now prove an analogue of Proposition~\ref{prop:prebluff}. The proof follows the same lines as that of Proposition~\ref{prop:prebluff}, but there are a number of complications that arise. Firstly, $H$ may not satisfy the bounded domain dichotomy, so the passing-up lemma may fail to yield an element of $\bar\s{}^H$, even for very large constants. We get around this problem as follows. Starting with the sequence of domains produced by Lemma~\ref{lem:repeatedly_producing_transverse_domains}, we apply the passing-up lemma for each natural number. This gives a sequence of domains where the diameters of the $H$--projections are infinite or bounded below by a divergent sequence. If any one of these domains has unbounded $H$--projection, then the proof is complete. Otherwise there are infinitely many of them, so two are transverse. 

We should then like to repeat the argument with this transverse pair in place of our starting pair of domains. Since these new domains are strictly higher up the $\nest$--lattice, this process could only be repeated finitely many times, so at some point a domain with unbounded $H$--projection must be produced. However, this is where the second difficulty comes in: $H$ may not be finitely generated, so the sets $\pi_W(H)$ need not be uniformly coarsely connected. In particular, our new transverse pair may fail all three of the conditions of Lemma~\ref{lemma:producingtransversedomains}, which would prevent us from applying Lemma~\ref{lem:repeatedly_producing_transverse_domains}. We must therefore be more careful about how we produce our new pair of transverse domains: we use Lemma~\ref{lemma:rhodistribution} to make sure that they meet condition~\ref{lptd:c} of Lemma~\ref{lemma:producingtransversedomains}.

Recall that the level of an element $U\in\s$, written $\ell(U)$, is the maximal length of a $\propnest$--chain in $\s$ with maximal element $U$.

\begin{prop} \label{prop:subprebluff}
Let $H$ be a subgroup of an HHG \((G, \mf{S})\) and let \(U, V \in \bar{\mf{S}}{}^H\) be transverse. There exists \(T\in \bar{\mf{S}}{}^H\) such that either \(U\propnest T\) or \(V\propnest T\).
\end{prop}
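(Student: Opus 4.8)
The plan is to mimic the proof of Proposition~\ref{prop:prebluff}, replacing the clean inductive use of Lemma~\ref{lemma:producingtransversedomains} with its more robust avatar Lemma~\ref{lemma:subproducingtransversedomains}, and using Lemma~\ref{lemma:rhodistribution} to manufacture the hypotheses of that lemma from the mere unboundedness of $\pi_U(H)$ and $\pi_V(H)$. Concretely: set $U_0=U$, $U_1=V$, and inductively build translates $U_{j+1}\in H\{U,V\}$ with $U_{j+1}\trans U_j$ and $\dist_j(\rho^{j-1}_j,\rho^{j+1}_j)$ larger than some fixed threshold, say $10E$. The subtlety over Proposition~\ref{prop:prebluff} is that to invoke Lemma~\ref{lemma:subproducingtransversedomains} at step $j$ we must verify either condition~\ref{lptd:b} ($\pi_{U_j}(H)$ is $C$--connected with diameter much larger than $\dist_{U_j}(\rho^{j-1}_{U_j},H)$) or condition~\ref{lptd:c} (finding $3c$ points of $H$ lying near a geodesic in $\C U_j$ and pairwise far apart). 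Since $U_j$ is a translate of $U$ or $V$, we do know $\pi_{U_j}(H)$ is unbounded; the difficulty is that $H$ need not be $C$--connected in $\C U_j$ for a uniform $C$, and the $\rho$--point $\rho^{j-1}_{U_j}$ could sit far out in $\pi_{U_j}(H)$, so condition~\ref{lptd:b} may fail and we are forced to produce the configuration of condition~\ref{lptd:c} by hand.

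Here is how I would produce that configuration. Suppose condition~\ref{lptd:b} fails for $U_j$ at every useful connectedness scale; unboundedness of $\pi_{U_j}(H)$ still lets us choose elements of $H$ whose $\C U_j$--projections march off to infinity along (a bounded neighbourhood of) a geodesic ray. The real content is to get \emph{many} such points that are \emph{pairwise far apart} and near a common geodesic, which is exactly what the conclusion of Lemma~\ref{lemma:rhodistribution} is designed to hand us after we exhibit enough domains $U_i'$ nested in $U_j$ with large projection. To get those nested domains: take two elements $y,z\in H$ with $\dist_{U_j}(y,z)$ enormous (possible by unboundedness), apply the Passing-up Lemma to the domains below $U_j$ in which $y,z$ project far — or, more carefully, iterate Lemma~\ref{lemma:subproducingtransversedomains}\ref{lptd:b}/\ref{lptd:c} lower in the lattice — to find $\geq P(D+2E)$ domains $U_i'\propnest U_j$ with $\dist_{U_i'}(y,z)>3E$ and with controlled projections strictly between $U_i'$ and $U_j$ (the technical "$\nest$--maximality"-type hypothesis of Lemma~\ref{lemma:rhodistribution}, arranged by discarding redundant domains). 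Lemma~\ref{lemma:rhodistribution} then gives $\diam(\bigcup_i\rho^{U_i'}_{U_j})>D-30E$; choosing a well-separated subfamily of these $\rho$--points and picking elements of $H$ projecting close to each (possible since, by definition of $\bar\s{}^H$ and coarse surjectivity of $\pi_{U_j}$ onto a coarsely dense subset hit by $H$, we can track $\pi_{U_j}(H)$ near these points — this needs the points to lie near the image $\pi_{U_j}(H)$, which is where one must be slightly delicate) yields exactly the $3c$ points feeding condition~\ref{lptd:c}.

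With the inductive family $(U_j)_{j=0}^{c+1}$ in hand, I would finish exactly as in Proposition~\ref{prop:prebluff}: pick $y,z\in H$ with $\dist_0(y,\rho^1_0)>2E$ and $\dist_{P(E)}(z,\rho^{P(E)-1}_{P(E)})>2E$, run the consistency inequality inductively along the chain $U_0,U_1,\dots,U_{P(E)}$ to conclude $\dist_j(y,z)>E$ for all $j\leq P(E)$, apply the Passing-up Lemma to get a domain $W$ with $\dist_W(y,z)>E$ in which some $U_j$ is properly nested, note $\pi_W(H)$ is unbounded by the bounded domain dichotomy for $\langle$ relevant cyclic subgroup $\rangle$ — actually here we only need $\diam\pi_W(H)$ large, which follows since $y,z\in H$ — hence $W\in\bar\s{}^H$, and since $U_j$ is an $H$--translate of $U$ or $V$, an appropriate $H$--translate $T$ of $W$ has $U\propnest T$ or $V\propnest T$. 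The main obstacle, as flagged, is the middle paragraph: verifying the hypotheses of Lemma~\ref{lemma:subproducingtransversedomains} at each stage of the induction when $H$ is badly distorted, so that neither $C$--connectedness nor an a priori geodesic-aligned configuration is available — this is precisely why Lemma~\ref{lemma:rhodistribution} and the two-pronged statement of Lemma~\ref{lemma:subproducingtransversedomains} were set up in advance, and assembling them correctly (tracking the various constants $C,C',D$ and the interplay between "near a geodesic in $\C U_j$" and "in the image $\pi_{U_j}(H)$") is the technical heart of the argument.
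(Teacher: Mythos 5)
Your high-level architecture matches the paper's (build a transverse chain of $H$--translates of $U$ and $V$, pass up, recurse on level), and you correctly locate the two danger points; but the proposal leaves both genuinely open, and the mechanism you sketch for the first would not work as stated. To manufacture the condition~\ref{lptd:c} configuration you propose to descend \emph{into} $\C U_j$: find many domains $U_i'\propnest U_j$ on which $y,z\in H$ project far apart, apply Lemma~\ref{lemma:rhodistribution} to separate their $\rho$--points, and then ``pick elements of $H$ projecting close to each''. Two problems. (i) The Passing-up Lemma produces a domain \emph{containing} one of a given family, not domains nested in $U_j$; nothing guarantees that any proper subdomain of $U_j$ sees $y$ and $z$ far apart (the distance could be carried entirely by $\C U_j$ itself), so the input to Lemma~\ref{lemma:rhodistribution} may simply not exist. (ii) Even granted well-separated $\rho$--points on $[y,z]\subset\C U_j$, there is no reason any point of $H$ projects near them: $\pi_{U_j}(H)$ can be an arbitrarily sparse unbounded set. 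You flag (ii) as ``slightly delicate'' but do not resolve it, and it is the crux. The paper's resolution goes in the opposite direction: it only needs condition~\ref{lptd:c} for the \emph{new} domains $W$ obtained by passing up from the transverse chain, and there the $3c$ points of $H$ come for free --- they are the translating elements $h_b\in H$ carrying $U^i_0$ or $U^i_1$ onto the domains nested in $W$. Since $U\nest U^i_0$ and $\dist_G(1,\mathbf{P}_U)\leq R$, equivariance and $\rho$--consistency give $\dist_W(h_b,\rho^{h_bU^i_0}_W)\leq ER+3E$, so the $h_b$ automatically land near the well-separated $\rho$--points that Lemma~\ref{lemma:rhodistribution} supplies. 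This is the idea your proposal is missing.

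The endgame also has a gap. You conclude $W\in\bar\s{}^H$ from ``$\diam\pi_W(H)$ large, which follows since $y,z\in H$''. That inference is false: Passing-up gives $\dist_W(y,z)>E$, hence diameter greater than $E$, but $H$ need not satisfy the bounded domain dichotomy, so a large --- even arbitrarily large --- diameter does not yield unboundedness. This is precisely why the paper runs its second inner induction with an escalating sequence of thresholds $D^i_{l+1}=10\diam(\pi_{W^i_l}(H))$: either some produced domain has unbounded $H$--projection, or infinitely many distinct domains appear, two of which are transverse, and one recurses with that new pair at strictly higher level (this also forces the case split over whether a single $W_A$ absorbs many of the $U^i_j$ or whether there are many distinct $W_A$, iterated up to $c$ times). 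Without both devices the induction does not close.
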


\begin{proof}
Let $R=\max\{\dist_G(1,\mathbf{P}_U),\dist_G(1,\mathbf P_V)\}$. We proceed inductively. For the inductive hypothesis, suppose that we have a pair of transverse domains $U^i_0\trans U^i_1$ such that 
\begin{enumerate}
\item   $\s_{U^i_k}\cap\{U,V\}$ is nonempty, for $k=0,1$; 
\item   Both pairs $(H,U^i_0)$ and $(H,U^i_1)$ satisfy at least one of the conditions of Lemma~\ref{lemma:producingtransversedomains};
\item   $\min\{\ell(U^i_0),\ell(U^i_1)\} \ge i +\min\{\ell(U),\ell(V)\}$.
\end{enumerate}
For the base case, we can take $U^0_0=U$ and $U^0_1=V$. Given $U^i_0$ and $U^i_1$, we shall either find $T$ as in the statement or we shall construct $U^{i+1}_0$ and $U^{i+1}_1$ satisfying the above.
Since $(G,\s)$ has complexity $c$, this process can only be repeated at most $c$ times, so we must eventually find the desired domain $T$. 

By the inductive hypothesis, the conditions of Lemma~\ref{lem:ensure_c_is_met} are met by $U^i_0$ and $U^i_1$. Let $D^i_1=100(ER+10E)$. During the proof we shall define a sequence $(D^i_j)$ with $D^i_j> D^i_{j-1}$. Given $D^i_j\ge D^i_1$ for some $j>0$, let $W^i_j$ be the domain produced by applying Lemma~\ref{lem:ensure_c_is_met} with constant $D^i_j$. If $H$ has unbounded projection to $\C W^i_j$, then setting $T=W^i_j$ completes the proof. Otherwise, set $D^i_{j+1}=10\diam(\pi_{W^i_j}(H))$, which is greater than $100(ER+10E)$. Since $\diam(\pi_{W^i_{j+1}}(H))>D^i_{j+1}=10\diam(\pi_{W^i_j}(H))$, we must have $W^i_j\ne W^i_k$ whenever $j\ne k$. Thus, if no $\C W^i_j$ has unbounded $H$--projection, then we eventually find a transverse pair $W^i_p\trans W^i_q$. In this case, we set $U^{i+1}_0=W^i_p$ and $U^{i+1}_1=W^i_q$. Lemma~\ref{lem:ensure_c_is_met} directly tells us that the inductive assumptions are satisfied by $U^{i+1}_0$ and $U^{i+1}_1$.
\end{proof}

The proof of Proposition~\ref{prop:subprebluff} relies on the following lemma. Recall that $p(C)$ denotes the integer produced by the passing-up lemma (Lemma~\ref{lem:passingUpLemma}) for constant~$C$.

\begin{lemma} \label{lem:ensure_c_is_met}
Let $U,V\in\s$ be transverse, and let $R=\max\{\dist_G(1,\mathbf P_U),\dist_G(1,\mathbf P_V)\}$. Suppose that we have two domains $U_0$ and $U_1$ that are transverse, have $\s_{U_j}\cap\{U,V\}\neq\varnothing$, and such that each pair $(H,U_0)$ and $(H,U_1)$ satisfies at least one of the conditions of Lemma~\ref{lemma:producingtransversedomains}. For any $D\ge 100(ER+10E)$, there is a third domain $W$ satisfying the following.
\begin{enumerate}
\item   $\diam(\pi_W(H))>D$.
\item   $\s_W\cap\{U_0,U_1\}$ is nonempty. In particular, $\s_W\cap\{U,V\}$ is nonempty.
\item   The pair $(H,W)$ satisfies condition~\ref{lptd:c} of Lemma~\ref{lemma:producingtransversedomains} with $C'=ER+10E$.
\end{enumerate}
\end{lemma}

\begin{proof}
Let $K=6cp(2D)$ and let $N=(Kp(D))^{c+1}$. Fix an element $y\in H$ with $\dist_{U_0}(y,\rho^{U_1}_{U_0})>2E$. Let $(U_j)$ be the sequence of domains produced by Lemma~\ref{lem:repeatedly_producing_transverse_domains}. Lemma~\ref{lem:repeatedly_producing_transverse_domains} also provides a point $z\in H$ with $\dist_{U_j}(z,y)>8E$ for all $j\le N$.

For any subset $A\subset\{0,\dots,N\}$ of size $p(D)$, we can apply Lemma~\ref{lem:passingUpLemma} to obtain a domain $W_A$ with $\dist_{W_A}(y,z)>D$ and with $U_a\propnest W_A$ for some $a\in A$. Since the $U_j$ are pairwise transverse, $W_A$ is not equal to any $U_j$. Choose $W_A$ to be $\nest$--minimal amongst all possibilities.

We now aim to use Lemma~\ref{lemma:rhodistribution} to find a domain $W'$ with a well-separated subset of the $U_j$ nested in it. There are two cases.

\medskip
\underline{Case 1:}
If some $W_A$ has at least $K$ of the $U_j$ nested in it, then let $B$ be the set of such indices $j$. Since $K=6cp(2D)$, it follows from Lemma~\ref{lemma:rhodistribution} that there is a set $B'\subset B$ of size $3c$ such that $\{\rho^{U_b}_{W_A} : b\in B'\}$ is $(D-40E)$--separated. Moreover, since $\dist_{U_b}(y,z)>8E$, each $\rho^{U_b}_{W_A}$ must be $E$--close to the geodesic $[y,z]\subset\C W_A$ by bounded geodesic image. Set $W'=W_A$. As the $U_j$ are translates of either $U_0$ or $U_1$, we can fix, for each $b\in B'$, an element $h_b\in H$ translating either $U_0$ or $U_1$ to $U_b$.
\hfill $\vartriangle$

\medskip
\underline{Case 2:}
Otherwise, for each $W_A$, fewer than $K$ of the $U_j$ are nested in $W_A$. Thus $|\{W_A\}|\ge\frac{N}{Kp(D)}$. Because $N=(Kp(D))^{c+1}$, we can now apply the passing-up lemma to cardinality--$p(D)$ subsets of $\{W_A\}$. Since the domain produced by the passing-up lemma has strictly higher level than any domain properly nested in it, this process can be repeated at most $c$ times. Thus, at some stage we obtain: a set $\{W_k\}$ of cardinality at least $\frac{N}{(Kp(D))^c}=Kp(D)$, all elements of which satisfy $\dist_{W_k}(y,z)>D$, and a domain $W'$ such that $\dist_{W'}(y,z)>D$ and at least $K$ of the $W_k$ are properly nested in $W'$. Otherwise, we would be able to apply the passing-up lemma again, contradicting the fact that $(G,\s)$ has complexity $c$.

We now proceed as in Case~1, but with $\{k:W_k\propnest W'\}$ in place of $B$. Lemma~\ref{lemma:rhodistribution} gives a set $B'$ of size $3c$ such that $\{\rho^{W_b}_{W'}:b\in B'\}$ is $(D-40E)$--separated, and again every $\rho^{W_b}_{W'}$ is $E$--close to the geodesic $[y,z]\subset\C W'$.

Although it may not be the case that $W_b$ is a translate of $U_0$ or $U_1$, each $W_b$ was obtained by repeatedly applying the passing-up lemma, starting with translates of $U_0$ and $U_1$. Thus, for each $b\in B'$ there is some $j_b\in\{0,\dots,N\}$ such that $U_{j_b}\nest W_b$. By $\rho$--consistency, $\dist_{W'}(\rho^{U_{j_b}}_{W'},\rho^{W_b}_{W'})\le E$. In particular, the separation of the $\rho^{W_b}_{W'}$ implies that the $U_{j_b}$ are distinct. As in Case~1, for each $b\in B'$, fix an element $h_b\in H$ translating either $U_0$ or $U_1$ to $U_{j_b}$.
\hfill $\vartriangle$

\medskip
In either case, we have a collection of $3c$ translates of $U_0$ and $U_1$ by elements $h_b\in H$, all of which are nested in some domain $W'$ with $\dist_{W'}(y,z)>D$. Because each of $U_0$ and $U_1$ has either $U$ or $V$ nested it, and because both $\dist_G(1,\mathbf{P}_U)$ and $\dist_G(1,\mathbf{P}_V)$ are at most $R$, we have that $\dist_{W'}(h_b,\rho^{h_bU_0}_{W'})\le ER+3E$ for translates of $U_0$, and similarly for translates of $U_1$. 

Fix $b_0\in B'$, and let $W=h_{b_0}^{-1}W'$. We have that $\diam(\pi_W(H))>D$; the set $\s_W\cap\{U_0,U_1\}$ is nonempty; and there are $3c$ points $h_{b_0}^{-1}h_b\in H$ whose projections to $\C W$ are $(ER+5E)$--close to a fixed geodesic and are pairwise at distance greater than $\frac D2$. The proof is complete.
\end{proof}

\begin{lemma} \label{lem:subcomponentshaverulers}
For each non-singleton, connected induced subgraph $C$ of $\Gamma^\trans(\bar\s{}^H)$, there exists $W_C\in\bar{\mf S}{}^H$ with $U\propnest W_C$ for all $U\in C^0$.
\end{lemma}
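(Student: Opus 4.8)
The statement is the subgroup analogue of Lemma~\ref{lem:componentshaverulers}, and the plan is to follow that proof verbatim, substituting Proposition~\ref{prop:subprebluff} for Proposition~\ref{prop:prebluff} throughout. The only arithmetic fact used about $\bar{\mf S}$ in the proof of Lemma~\ref{lem:componentshaverulers} is that it is closed under nesting-upward of the domains produced — and this is automatic, since $\bar{\mf S}{}^H$ is by definition the set of domains with unbounded $H$--projection, and Proposition~\ref{prop:subprebluff} produces such domains. Finite complexity, which is what makes all the termination arguments run, is a property of $(G,\s)$ and is unaffected by passing to $H$.

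First I would handle a finite component $C$ by induction on $n=|C^0|$. For $n=2$, say $C^0=\{U_1,U_2\}$: by Proposition~\ref{prop:subprebluff} there is $T\in\bar{\mf S}{}^H$ with (say) $U_1\propnest T$; we cannot have $T\bot U_2$ or $T\nest U_2$ (level reasons, exactly as before), so either $U_2\propnest T$ and we are done, or $U_2\trans T$, in which case we apply Proposition~\ref{prop:subprebluff} to the pair $\{U_2,T\}$ and repeat. Finite complexity forces termination with a domain in which both $U_1,U_2$ are properly nested. For the inductive step, pick a leaf $U_n$ of a spanning tree of $C$, so that $C-\{U_n\}$ is still connected; the induction hypothesis gives $W\in\bar{\mf S}{}^H$ with every element of $C-\{U_n\}$ properly nested in $W$; we cannot have $W\nest U_n$ or $W\bot U_n$, so either $U_n\propnest W$ (done) or $U_n\trans W$, and then applying the $n=2$ case to $\{W,U_n\}$ finishes it.

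For an infinite component $C$, I would take any finite connected $D\subseteq C^0$ of size at least $2$, use the finite case together with finite complexity to pick a $\nest$--maximal $W_D\in\bar{\mf S}{}^H$ in which every element of $D$ is properly nested, and then argue that every $V\in C^0$ is in fact properly nested in $W_D$. If not, take a path $U_0,U_1,\dots,U_n=V$ in $C$ with $U_0\in D$, and let $U_i$ be the first vertex not properly nested in $W_D$; then $U_{i-1}\propnest W_D$ and $U_i\trans U_{i-1}$ force $U_i\trans W_D$ (the remaining relations are excluded by level considerations), so the finite case applied to $\{U_i,W_D\}$ produces $T\in\bar{\mf S}{}^H$ in which both are properly nested, contradicting $\nest$--maximality of $W_D$. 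Hence $W_C=W_D$ works.

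The only genuine content here is Proposition~\ref{prop:subprebluff}, which has already been proved; the present lemma is a purely combinatorial bookkeeping exercise over the $\nest$--lattice, so I do not expect any real obstacle. The one point to be slightly careful about is that each invocation of Proposition~\ref{prop:subprebluff} is legitimate — i.e.\ that the pairs we feed into it are genuinely transverse and lie in $\bar{\mf S}{}^H$ — but this is exactly the role of the exclusion-of-other-relations steps (a domain properly nested in another cannot be orthogonal to or nested in a third with which its super-domain is transverse, etc.), which go through word-for-word as in Lemma~\ref{lem:componentshaverulers}.
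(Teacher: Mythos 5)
Your proposal is correct and matches the paper exactly: the paper's proof of this lemma simply states that it is identical to that of Lemma~\ref{lem:componentshaverulers} with $\bar\s$ replaced by $\bar{\mf S}{}^H$ and Proposition~\ref{prop:prebluff} replaced by Proposition~\ref{prop:subprebluff}, which is precisely the substitution you carry out (and you helpfully spell out the details the paper leaves implicit).
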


\begin{proof}
The proof is identical to that of Lemma~\ref{lem:componentshaverulers}, but with $\bar\s$ replaced by $\bar\s{}^H$, and references to Proposition~\ref{prop:prebluff} replaced by references to Proposition~\ref{prop:subprebluff}.
\end{proof}

The final proposition of this section is used to conclude the final statement of Theorem~\ref{thm:subeyries}, completing its proof. In view of Remark~\ref{remark:hieromorphisms}, we work with a finitely generated, infinite subgroup of an HHG, which is the analogue of a group acting on an HHS by HHS automorphisms, with finitely generated image in $\Sym(X)$, and with an unbounded orbit.

\begin{prop} \label{prop:thereisaneyrie}
If $H$ is a finitely generated, infinite subgroup of an HHG of complexity $c$, then there is some domain $U$ such that $\pi_U(H)$ is unbounded.
\end{prop}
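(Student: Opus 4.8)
The plan is to argue by contradiction and induct on the complexity $c$: assuming that $\pi_U(H)$ is bounded for \emph{every} $U\in\s$, we show that $H$ is finite. As in Remark~\ref{remark:hieromorphisms}, the induction is really carried out for finitely generated groups acting \emph{properly} on an HHS by HHS automorphisms (the restriction to $H\le G$ merely lightens notation); the point of properness is that then ``infinite'' and ``unbounded orbit'' coincide. For the set-up: since the action of $G$ on itself is proper and $H$ is infinite, $H$ is unbounded in $G$, and the distance formula forces $\sup_{U\in\s}\diam\pi_U(H)=\infty$ — indeed, if $\dist_U(x,y)\le D$ for all $U$, then taking a threshold $s$ just above $D$ in the distance formula gives $\dist_G(x,y)\le B_s$, so unbounded distances within $H$ must come from unboundedly large (though individually finite) projections. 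Thus for every $D$ there is a domain $W$ with $\diam\pi_W(H)>D$, and by the bounded domain dichotomy for $G$ every such $W$ is unbounded. Fixing a finite generating set of $H$ gives a constant $L$ for which every $\pi_U(H)$ is $L$--coarsely connected; write $M_S=\diam\pi_S(H)<\infty$ for the $\nest$--maximal domain $S$, and fix, once and for all, a decreasing sequence of thresholds $C_0>C_1>\dots>C_c$ with $C_c>M_S+100E$ and each $C_{i-1}$ chosen large relative to $C_i$, $L$, $E$, $c$ and the passing-up function $P(\cdot)$.

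The base case $c=1$ is immediate: then $\s=\{S\}$, $\C S$ is hyperbolic, and the distance formula makes $\pi_S$ a quasi-isometry, so $\pi_S(H)$ is unbounded, contradicting the assumption. For the inductive step ($c\ge 2$), among the domains $W$ with $\diam\pi_W(H)>C_{\ell(W)}$ — a nonempty collection, since any $W$ with $\diam\pi_W(H)>C_0$ qualifies — choose one with $\ell(W)$ maximal. Since $C_c>M_S$ we cannot have $W=S$, so the dominion $\mathbf F_W$ is a hierarchically quasiconvex sub-HHS of complexity $\ell(W)<c$, and we split according to the orbit $H\cdot W$. If $H\cdot W$ is finite, then $H_W:=\stab_H(W)$ has finite index in $H$, hence is finitely generated and infinite, and — fixing $W$ — it acts properly and by HHS automorphisms on $\mathbf F_W$, so its orbit there is unbounded; by the inductive hypothesis some domain $V\in\s_W$ has $\pi_V(H_W)$ unbounded, and since on $\mathbf F_W$ this projection coarsely agrees with the restriction of the ambient $\pi_V$, we get $\pi_V(H)$ unbounded, a contradiction. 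If $H\cdot W$ is infinite, then by Lemma~\ref{lem:transversetranslates} there is $g\in H$ with $W\trans gW$, and $\diam\pi_{gW}(H)=\diam\pi_W(H)>C_{\ell(W)}$; starting from this transverse pair we run the bounded-diameter analogue of the argument of Proposition~\ref{prop:subprebluff} — using Lemma~\ref{lemma:subproducingtransversedomains} (hypothesis~\ref{lptd:b} is available because $\pi_\bullet(H)$ is uniformly $L$--coarsely connected, and hypothesis~\ref{lptd:c} via Lemma~\ref{lemma:rhodistribution}) we build a long chain $W=U_0\trans U_1\trans\dots\trans U_{P(C_{\ell(W)+1})}$ of $H$--translates of $W$ with consecutive $\rho$--points $10E$--separated, pick $y,z\in H$ projecting far apart at the two ends (possible as the diameters are enormous), use consistency along the chain to get $\dist_{U_j}(y,z)>E$ for all $j$, and apply the Passing-up Lemma at threshold $C_{\ell(W)+1}$ to obtain a domain $W^+$ with some $U_j\propnest W^+$ and $\dist_{W^+}(y,z)>C_{\ell(W)+1}$. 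Then $\diam\pi_{W^+}(H)>C_{\ell(W)+1}\ge C_{\ell(W^+)}$ while $\ell(W^+)>\ell(U_j)=\ell(W)$, contradicting maximality of $\ell(W)$. In either case we reach a contradiction, so the assumption fails and some domain has unbounded $H$--projection.

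The main obstacle is concentrated in the infinite-orbit case. The reduction, the base case, and the finite-orbit case are soft, but because $H$ need not satisfy the bounded domain dichotomy, passing-up does not directly yield ``$\pi_{W^+}(H)$ is large'' — this is exactly the phenomenon that makes Proposition~\ref{prop:subprebluff} delicate. The real work is therefore the careful accounting of finite-but-growing diameters against the level-indexed thresholds $C_i$, carried out through the interplay of Lemmas~\ref{lemma:subproducingtransversedomains} and~\ref{lemma:rhodistribution}, essentially re-deploying the machinery of Proposition~\ref{prop:subprebluff} in the bounded-diameter setting. (One cannot shortcut the recursion by quoting Theorem~\ref{thm:subeyries} in lower complexity, since its ``moreover'' is precisely the statement being proved; hence the self-contained induction above.)
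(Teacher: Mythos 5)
Your reduction, your base case, and your Case 2 (infinite orbit) are essentially sound: Case 2 is the engine of the paper's own argument (the transverse-pair-plus-passing-up step of Proposition~\ref{prop:subprebluff} run at a fixed finite scale), and your level-indexed thresholds do close that half of the induction. The gap is in Case 1. If $H\cdot W$ is finite you cannot feed $H_W=\stab_H(W)$ acting on $\mathbf{F}_W$ into the inductive hypothesis: that action is not proper (already for $\Z^2$ with $W$ one of the two line domains, the complementary $\Z$ factor lies in $\stab_H(W)$ and moves points of $\mathbf{F}_W$ a bounded amount), so ``$H_W$ infinite'' does not yield ``unbounded orbit in $\mathbf{F}_W$''. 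Worse, under your standing contradiction hypothesis every $\pi_V(H)$ with $V\nest W$ --- including $V=W$ --- is bounded, so the gate of $H_W$ to $\mathbf{F}_W$ is a genuinely bounded set by the distance formula for the sub-HHS $(\mathbf{F}_W,\s_W)$; no version of the inductive statement can then produce a domain with unbounded $H_W$-projection. The data available in Case 1 (one domain with finite $H$-orbit whose $H$-projection has large but finite diameter, all other projections bounded) simply carries no contradiction, and your argument supplies none.

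The root cause is that you fix a single domain exceeding a single fixed finite threshold $C_{\ell(W)}$, so the ``few domains'' horn of your dichotomy is vacuous: finitely many finite diameters are still finite. The paper instead carries an infinite family: for every $n$ the uniqueness axiom provides $V_n$ with $\diam\pi_{V_n}(H)>n$, and the dichotomy is on whether the \emph{set} $\{V_n\}$ is finite. If it is, pigeonhole over $n\to\infty$ forces one domain to appear for infinitely many $n$ and hence to have unbounded $H$-projection --- one is done outright, with no contradiction and no descent into $\mathbf{F}_W$. If it is infinite, finite complexity yields a transverse pair and the machinery of your Case 2 pushes the whole $n$-indexed family up a level; finite complexity then terminates the recursion. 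Replacing your Case 1 by this pigeonhole step, and threading the $n$-indexed family (rather than a single threshold) through the level induction, repairs the proof and is exactly what the paper does.
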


\begin{proof}
Since $H$ is finitely generated and all maps $\pi_V$ are $E$--coarsely Lipschitz, there is a constant $C>10E$ such that every $\pi_V(H)$ is $C$--connected. By the uniqueness axiom, for each $n>10^{c+2}C$ there is a domain $V_n$ such that $\diam(\pi_{V_n}(H))>n$. If the set of all $V_n$ is finite, then there is some $p$ such that $\pi_{V_p}(H)$ is unbounded. Otherwise there is a pair $(p,q)$ such that $V_p\trans V_q$. Since $\diam(\pi_{V_p}(H))>10C$, consistency ensures that $\dist_{V_q}(H,\rho^{V_p}_{V_q})\leq E<C$, and similarly for $V_p$. 

Condition~\ref{lptd:b} of Lemma~\ref{lemma:producingtransversedomains} is therefore satisfied by both pairs $(H,V_p)$ and $(H,V_q)$. Lemma~\ref{lem:repeatedly_producing_transverse_domains} provides a sequence $(U_j)$ of translates of $V_p$ and $V_q$, and points $y,z_n\in H$, for each $n>10^{c+2}C$, such that $\dist_{U_j}(y,z_n)>E$ for all $j\le p(n)$. We can then apply the passing-up lemma (Lemma~\ref{lem:passingUpLemma}) to obtain domains $W_n$ with $\diam(\pi_{W_n}(H))>n$ that have level strictly greater than $\min\{\ell(V_p),\ell(V_q)\}$. If the set of $W_n$ is finite, then there is some $r$ for which $\diam(\pi_{W_r}(H))$ is unbounded. Otherwise we can find a transverse pair. We can now repeat the argument with this pair in place of $V_p$ and $V_q$. By finite complexity, this process must terminate, which completes the proof. 
\end{proof}

\subsection{The omnibus subgroup theorem} \label{subsection:omnibus}

We now prove the \emph{omnibus subgroup theorem} for HHGs. The theorem follows from Theorem~\ref{thm:subeyries} and \cite[Prop.~6.68]{genevois:hyperbolicities}, which is a variation on the general result \cite[Thm~5.1]{clayuyanik:simultaneous}, and which we now state. Recall that an isometry is said to be \emph{elliptic} if it has bounded orbits.

\begin{prop}[{\cite[Prop.~6.68]{genevois:hyperbolicities}}] \label{simultaneous}
Suppose that a group $G$ is acting on (quasigeodesic) hyperbolic spaces $X_1,\dots,X_n$, with each element of $G$ acting either elliptically or loxodromically on each $X_i$. If for each $X_i$ there is an element of $G$ acting loxodromically on $X_i$, then there is some element of $G$ that acts loxodromically on every $X_i$.
\end{prop}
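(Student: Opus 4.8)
The plan is to argue by induction on $n$. The case $n=1$ is exactly the hypothesis. For the inductive step, suppose $g\in G$ acts loxodromically on each of $X_1,\dots,X_{n-1}$, and let $h\in G$ act loxodromically on $X_n$. Since every element acts elliptically or loxodromically on $X_n$, we may assume $g$ acts elliptically there, for otherwise $g$ is already the desired element. I would then seek suitable exponents $p,q$ and show that $u=g^ph^q$ acts loxodromically on all of $X_1,\dots,X_n$.

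The input from the geometry of a (quasigeodesic) hyperbolic space $X$ is threefold. First, if $a,b$ act loxodromically on $X$ with $\{a^+,a^-\}\cap\{b^+,b^-\}=\emptyset$, then $a^rb^s$ acts loxodromically once $r,s$ are large (ping-pong / North--South dynamics on $\partial X$). Second, if $a$ acts loxodromically and $b$ elliptically on $X$, then $a^rb$ acts loxodromically for $r$ large \emph{provided} $b$ does not coarsely conjugate $a$ to $a^{-1}$; the exceptional case is exactly the infinite-dihedral one, and it is neutralised by giving the elliptic factor an even exponent, since then it no longer ``reflects'' the loxodromic axis. (Here one uses that the $\langle b\rangle$--orbit, hence also the $\langle b^2\rangle$--orbit, lies in a bounded set whose diameter does not grow with the exponent of the other factor.) Third, by the absence of parabolics, two loxodromics on the same $X_j$ whose fixed pairs meet must have equal fixed pairs --- a shared single endpoint would force a parabolic element.

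Now take $p,q$ even and large and run through the spaces. On $X_n$, $h^q$ is loxodromic with translation length $q\,\tau_n(h)\to\infty$ while $g^p$ is elliptic with orbit of diameter independent of $q$; the second fact (with $q$ even absorbing a possible dihedral obstruction) makes $u$ loxodromic on $X_n$ once $q$ is large. On an $X_i$ with $i<n$ on which $h$ is elliptic, the roles are reversed --- $g^p$ is the large loxodromic and $h^q$ the bounded elliptic --- so $u$ is loxodromic there once $p$ is large. On an $X_i$ on which $h$ is loxodromic with fixed pair disjoint from that of $g$, the first fact applies. On an $X_i$ on which $h$ is loxodromic with the \emph{same} fixed pair as $g$ --- the only remaining possibility, by the third fact --- both act as orientation-preserving coarse translations of the common axis, so $u$ translates it by $p\alpha_i+q\beta_i$ with $\alpha_i,\beta_i\neq0$, which is nonzero as long as $p/q$ avoids a fixed finite set of rationals. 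All the conditions thereby imposed on $(p,q)$ are of the form ``large'', ``even'', or ``avoiding finitely many ratios'', with no circular dependence between the two thresholds, so they can be met simultaneously; this closes the induction.

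The step I expect to carry the real content is the second of the three facts, made uniform over the finitely many spaces. An elliptic isometry need not have bounded displacement away from its quasi-centre, so ``loxodromic times elliptic is loxodromic'' is not automatic; one proves it by passing to the square, $(g^ph^q)^2=g^p\,(h^qg^ph^{-q})\,h^{2q}$, and analysing the conjugate $h^qg^ph^{-q}$, which is either loxodromic and independent of $g^p$, or a reflection of its axis, or --- excluded by the no-parabolics hypothesis --- shares a single endpoint with it. Together with the parity bookkeeping that removes the infinite-dihedral exceptions and the genericity used on the common-axis spaces, this is the technical heart; the rest is just the finite combinatorics of the constraints.
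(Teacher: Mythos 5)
First, a framing point: the paper does not prove this proposition --- it is imported verbatim from Genevois \cite[Prop.~6.68]{genevois:hyperbolicities} (strengthening Clay--Uyanik), so there is no in-paper argument to compare against. Your overall strategy (induction on $n$, a word $g^ph^q$, case analysis according to the dynamics of $g$ and $h$ on each $X_i$) is the standard shape of the known proofs. But as written the sketch has genuine gaps. Your ``third fact'' is false: in an action where every element is elliptic or loxodromic, two loxodromics can share exactly one boundary point. Take the lamplighter group $(\bigoplus_{\Z}\Z/2)\rtimes\Z$ acting on its Diestel--Leader/Bass--Serre tree fixing an end $\omega$: the elements $t$ and $at$ are both loxodromic with one common endpoint $\omega$ and distinct second endpoints, and a simplicial tree admits no parabolics. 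So your case analysis on the spaces where both $g$ and $h$ are loxodromic omits a case (it can be handled via the Busemann quasimorphism at the common endpoint, but it is missing, and the justification for excluding it is wrong). Relatedly, the proviso in your ``second fact'' is not the right one: the obstruction to $a^rb$ being loxodromic ($a$ loxodromic, $b$ bounded displacement) is $ba^+=a^-$, which is strictly weaker than ``$b$ coarsely conjugates $a$ to $a^{-1}$'' --- if $ba^+=a^-$ but $ba^-\notin\{a^\pm\}$, one still computes $d(x,(a^rb)^2x)=O(1)$ while $d(x,a^rbx)\approx r\ell(a)$, so $a^rb$ is not loxodromic. The bad set of exponents $\{q: b^qa^+=a^-\}$ is a proper coset of some $d\Z$ with $d\ge 2$, and $d$ need not be $2$, so ``even'' does not avoid it in general (one needs a common multiple of finitely many such $d$'s). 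These two points are repairable.

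The serious gap is the claimed absence of circularity between the thresholds for $p$ and $q$. On a space where $g$ is loxodromic and $h$ is elliptic, the threshold making $g^ph^q$ loxodromic is (linearly) controlled by the Gromov product $(g^-\cdot h^qg^+)_x$, not merely by $\operatorname{diam}(\langle h\rangle\cdot x)$: an elliptic isometry with bounded orbits can still move $g^+$ arbitrarily close to $g^-$ on the boundary (think of a rotation of infinite order), so this constant is unbounded in $q$. Symmetrically, on $X_n$ the threshold for $q$ depends on $(h^-\cdot g^ph^+)$ and hence on $p$. Your parenthetical about the $\langle b\rangle$--orbit lying in a bounded set controls displacement only, not where $h^q$ sends the endpoints of $g$, which is where the cross-dependence actually enters. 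In the worst case one needs $p\geq P(q)$ and $q\geq Q(p)$ for functions one does not control, and no pair need exist. This is exactly the difficulty the actual proofs are engineered around: in the lineal and focal cases one uses the Busemann quasimorphism $\beta$ (for which $\beta(h)=0$ when $h$ is elliptic, so $\beta(g^ph^q)\geq p\beta(g)-D$ with $D$ independent of $q$, breaking the circle), and the remaining general-type configurations are handled with more carefully chosen words than $g^ph^q$. Without an argument of this kind, the inductive step does not close.
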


\begin{coro}[Omnibus subgroup theorem] \label{omnibus}
Let $(G,\s)$ be an HHG, and let $H<G$, with eyries $\mathcal E(H)$. Assume that, for each eyrie $W$, there is an element of $H$ acting loxodromically on $\C W$. Then there is an element $h\in H$ with $\bigset(h)=\mathcal E(H)$.
\end{coro}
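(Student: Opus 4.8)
The plan is to pass to the finite-index subgroup of $H$ that fixes every eyrie, to verify that its action on the hyperbolic spaces of the eyries satisfies the hypotheses of Proposition~\ref{simultaneous}, and then to read off the big-set of the element it produces using Theorem~\ref{thm:subeyries}.

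First I would write $\Ce(H)=\{W_1,\dots,W_n\}$ and let $H_0$ be the kernel of the action of $H$ on the finite set $\Ce(H)$; this is a finite-index normal subgroup of $H$, and every element of $H_0$ fixes each $W_i$. Fixing $i$, the stabiliser $G_{W_i}$ of $W_i$ in $G$ has finite index in $G$, contains $H_0$, and $(G_{W_i},\s_{W_i})$ is an HHG with $\nest$--maximal domain $W_i$; as in Remark~\ref{rem:trichotomy}, \cite[Thm~14.3]{behrstockhagensisto:hierarchically:1} then shows that $G_{W_i}$, and hence $H_0$, acts acylindrically on $\C W_i$, so every element of $H_0$ acts on $\C W_i$ either elliptically or loxodromically. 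By hypothesis there is some $h_i\in H$ fixing $W_i$ that acts loxodromically on $\C W_i$, and replacing $h_i$ by a positive power lands it in $H_0$ without affecting loxodromicity; hence for each $i$ there is an element of $H_0$ acting loxodromically on $\C W_i$.

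Next I would apply Proposition~\ref{simultaneous} to $H_0$ acting on $\C W_1,\dots,\C W_n$ to obtain an element $h\in H_0\leq H$ that acts loxodromically on every $\C W_i$, and then check that $\bigset(h)=\Ce(H)$. For $\Ce(H)\subseteq\bigset(h)$: since $h$ acts loxodromically on $\C W_i$, the orbit $\pi_{W_i}(\langle h\rangle)=\langle h\rangle\cdot\pi_{W_i}(1)$ is unbounded, so $W_i\in\bigset(h)$. For the reverse inclusion, take $U\in\bigset(h)$; then $\pi_U(H)$ is unbounded because $\langle h\rangle\leq H$, so $U\in\bar\s{}^H$, and Theorem~\ref{thm:subeyries} gives $U\nest W_i$ for some $i$. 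Since big-sets consist of pairwise orthogonal domains and $U,W_i\in\bigset(h)$, we must have $U=W_i$ or $U\bot W_i$; the latter is incompatible with $U\nest W_i$, so $U=W_i\in\Ce(H)$. Hence $\bigset(h)=\Ce(H)$, as required.

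The main obstacle is the input needed to invoke Proposition~\ref{simultaneous}, namely that no element of $H_0$ acts parabolically on any $\C W_i$: this is exactly where being an HHG (and not just acting on an HHS) is used, through the acylindricity of the action of a domain-stabiliser on the associated hyperbolic space. I would take care to justify that $(G_{W_i},\s_{W_i})$ really inherits an HHG structure with $W_i$ as its $\nest$--maximal domain, so that \cite[Thm~14.3]{behrstockhagensisto:hierarchically:1} applies; everything after that --- identifying $\bigset(h)$ --- is routine bookkeeping with the nesting and orthogonality relations.
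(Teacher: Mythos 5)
Your overall strategy is the same as the paper's: pass to the finite-index subgroup $H_0\leq H$ fixing each eyrie, verify the elliptic/loxodromic dichotomy on each $\C W_i$, apply Proposition~\ref{simultaneous}, and identify the big-set. Your final bookkeeping (both inclusions of $\bigset(h)=\Ce(H)$, using Theorem~\ref{thm:subeyries} and pairwise orthogonality of big-sets) is correct and in fact more explicit than what the paper writes. However, there is a genuine gap in the one step you yourself flag as the crux, namely ruling out parabolic elements. Your route goes through the claims that $\stab_G(W_i)$ has finite index in $G$ and that $(\stab_G(W_i),\s_{W_i})$ is an HHG with $\nest$--maximal domain $W_i$, so that \cite[Thm~14.3]{behrstockhagensisto:hierarchically:1} applies. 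The first claim is false in general: $W_i$ is an eyrie of $H$, not of $G$, so while $\Ce(H)$ is a finite $H$--invariant set (whence $\stab_H(W_i)$ has finite index in $H$), the $G$--orbit of $W_i$ may be infinite --- $G$ acts \emph{cofinitely} on $\s$, not with finite orbits (think of the stabiliser of an annulus domain in a mapping class group). The second claim is also not available: nothing in the axioms makes $\stab_G(U)$ act properly cocompactly on a space with index set $\s_U$ for an arbitrary domain $U$, so one cannot invoke the acylindricity theorem for the $\nest$--maximal domain of an HHG. This is precisely the situation addressed by \cite[Thm~3.1]{durhamhagensisto:corrigendum}, which the paper cites: for \emph{any} $U\in\s$, every element of $\stab_G(U)$ acts either elliptically or loxodromically on $\C U$. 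That statement is a nontrivial theorem (it is the subject of the corrigendum) and is the correct input here; it cannot be shortcut through Remark~\ref{rem:trichotomy}, whose argument applies only to an eyrie of $G$ itself, which is fixed by all of $G$ and contains every unbounded domain.

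With that citation substituted for your acylindricity argument, the rest of your proof goes through and coincides with the paper's.
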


\begin{proof}
If $\mathcal E(H)$ is empty then this holds for any $h\in H$. Otherwise, since $\mathcal E(H)$ is finite, a finite index subgroup $H'<H$ acts trivially on $\mathcal E(H)$. In particular, $H'$ acts on $\C W$ for each $W\in\mathcal E(H)$, and if $h\in H$ acts loxodromically on $W$, then so does $h^{|H:H'|}\in H'$. By \cite[Thm~3.1]{durhamhagensisto:correction}, for any $U\in\s$, every element of $\stab_G(U)$ acts either elliptically or loxodromically on $\C U$, so we can apply Proposition~\ref{simultaneous} to see that $\mathcal E(H)\subset\bigset(h)$. Since elements of $\bigset(h)$ are pairwise orthogonal \cite[Lem.~6.7]{durhamhagensisto:boundaries}, this completes the proof.
\end{proof}

Note that the assumption that there is a loxodromic for each eyrie is weaker than the one in \cite{durhamhagensisto:boundaries}, which asks for the action of $\stab_H(U)$ on $\C U$ to factor via an acylindrical action for all domains $U$. Indeed, every unbounded acylindrical action on a hyperbolic space has a loxodromic element by \cite[Thm~1.1]{osin:acylindrically}, but not all actions containing loxodromics are acylindrical.

According to Gromov's classification of actions on hyperbolic spaces \cite{gromov:hyperbolic}, as clarified in \cite{capracecornuliermonodtessera:amenable}, if the action of $H'$ on $\C W$ does not contain a loxodromic element, then since it is unbounded, it is horocyclic. Note that it is difficult to make general statements about horocyclic actions, because every group admits a horocyclic action on a hyperbolic space, namely its combinatorial horoball. If the group is countable and discrete then this action is proper.
 
We conclude that if $H$ does not satisfy the conditions of Theorem~\ref{omnibus}, then there is some $W\in\mathcal E(H)$ such that the following hold for any $x\in\C W$.
\begin{shortitem}
\item   $H'\cdot x$ is unbounded.
\item   $\langle h\rangle\cdot x$ is bounded for all $h\in H'$.
\item   $H'\cdot x$ is not quasiconvex (in particular, it is not coarsely dense) \cite[Prop.~3.2]{capracecornuliermonodtessera:amenable}.
\item   $H'$ has a unique limit point in $\partial\C W$, and this point is the unique finite orbit of $H'$ in $\partial\C W$ \cite[Prop.~3.1]{capracecornuliermonodtessera:amenable}.
\end{shortitem}
\small\singlespacing
\bibliographystyle{alpha}
\bibliography{bibtex}
\end{document}